\newcommand{\dq}[1]{\mathrm{DQ}(#1)}
\providecommand*{\donothing}[1]{}
\newcommand{\kryl}[2]{{\mathcal K\mathcal M}_{#2}(#1)}
\newcommand{\wt}{\widetilde}
\newcommand{\wh}{\widehat}
\newcommand{\norm}[1]{\lVert #1 \rVert}
\definecolor{Gray}{gray}{0.95}
\newtheorem{thm}{Theorem}[section]
\newtheorem{prop}[thm]{Proposition}
\newtheorem{lem}[thm]{Lemma}
\newtheorem{cor}[thm]{Corollary}
\theoremstyle{definition}
\newtheorem{defn}[thm]{Definition}
\theoremstyle{remark}
\title{On the decay of the off-diagonal singular values in cyclic reduction\footnote{%
This work has been partially supported by an INdAM/GNCS Research Project 2016.
}}
\author{Dario A. Bini\footnote{%
    Dipartimento di Matematica, Unversit\`a di Pisa, L.go B. Pontecorvo, 5
    56127 Pisa, Italy, (\texttt{bini@dm.unipi.it})}, \ 
  Stefano Massei\footnote{%
    Scuola Normale Superiore di Pisa, P.za Cavalieri, 7, 56126, Pisa, Italy, 
    (\texttt{stefano.massei@sns.it})}, \
  Leonardo Robol\footnote{%
    KU Leuven, Dept. Computerwetenschappen (Postbus 02402), 
    Celestijnenlaan 200A, 3001 Heverlee (Leuven), Belgie,
    (\texttt{leonardo.robol@cs.kuleuven.be})}
}
\date{}
\renewcommand{\leq}{\leqslant}
\renewcommand{\geq}{\geqslant}
\begin{document}
  \maketitle

  \begin{abstract}
  It was recently observed in \cite{netna} that the singular values of
  the off-diagonal blocks of the matrix sequences generated by the
  Cyclic Reduction algorithm decay exponentially. This property was
  used to solve, with a higher efficiency, certain quadratic matrix
  equations encountered in the analysis of queuing models. In this
  paper, we provide a sharp theoretical bound to the basis of this
  exponential decay together with a tool for its estimation based on a
  rational interpolation problem.  Applications to solving $n\times n$
  block tridiagonal block Toeplitz systems with $n\times n$
  semiseparable blocks and certain generalized Sylvester equations in
  $O(n^2\log n)$ arithmetic operations are shown.

  \bigskip 

  {\bf Keywords:} Cyclic reduction, quasiseparable matrices,
  rational interpolation, Sylvester equations, exponential decay,
  block tridiagonal systems.

  \bigskip 

  {\bf AMS subject classifications:} 
    41A20, % Approximation by rational functions
    60J22, % Computational methods in Markov chains
    65F05. % Direct methods for linear systems and matrix inversion
    
\end{abstract}

  \section{Introduction}
Cyclic reduction, CR for short, is an algorithm originally introduced
by G.~H.~Golub and R.~W.~Hockney in \cite{hockney1965fast} for the
solution of certain block tridiagonal linear systems coming from the
finite difference discretization of elliptic PDEs. It has been later
generalized and extended to other contexts, like for instance to the
solution of polynomial matrix equations, and has been proven to be a
successful method for solving a large class of queuing problems and
infinite Markov Chains. We refer the reader to the books \cite{SMC},
\cite{bim:book} and to the survey paper \cite{CR} for more details and
for the many references to the literature.

Given three $m\times m$ matrices $A_{-1}$, $A_0$, $A_1$, and a
positive integer $n$ consider the block tridiagonal block Toeplitz
matrix $\mathcal A_n=\hbox{trid}_n(A_{-1},A_0,A_1)$ having block-size
$n$ where $A_0$ is on the main diagonal while $A_{-1}$ is in the lower
diagonal and $A_1$ in the upper diagonal. For a vector $b\in\mathbb
R^{mn}$, consider the system $\mathcal A_nx=b$.  Roughly speaking, CR
generates three sequences of $m\times m$ matrices $A_{-1}^{(h)}$,
$A_0^{(h)}$ and $A_1^{(h)}$, for $h=0,1,\ldots$, with $A^{(0)}_i=A_i$,
$i=-1,0,1$, and a sequence of systems $\mathcal A_{n_h}
x^{(h)}=b^{(h)}$, $\mathcal
A_{n_h}=\hbox{trid}_{n_h}(A_{-1}^{(h)},A_0^{(h)},A_1^{(h)})$, where
$n_h=\lfloor n_{h-1}/2\rfloor$ and $x^{(h)}$ is a subvector of
$x$. This way, solving a block tridiagonal block-Toeplitz system of
block size $n$ is reduced to solving a block tridiagonal block
Toeplitz system of size $\lfloor n/2\rfloor$.  The computation of
$A_i^{(h)}$ given $A_i^{(h-1)}$, for $i=-1,0,1$, amounts to perform
one matrix inversion and few matrix multiplications for the overall
cost per step of $O(m^3)$ arithmetic operations (ops).

Under certain assumptions, customarily verified in many applications,
the sequence $A_1^{(h)}$ and/or $A_{-1}^{(h)}$ converge doubly
exponentially to zero. This makes CR a powerful tool for solving
large or even infinite systems, as well as matrix equations of the
kind $A_{-1}+A_0X+A_1X^2=0$, typically encountered in the analysis of
queuing problems \cite{LatRam}, where the unknown is the $m\times m$
matrix $X$ and a solution of spectral radius at most $1$ is sought.

In short, the three sequences $A_i^{(h)}$, $i=-1,0,1$, which are
related to the Schur complements of certain principal submatrices of
$\mathcal A_n$, are given by the following matrix recurrences where we
report also two additional auxiliary sequences, namely $\widetilde
A^{(h)}$ and $\widehat A^{(h)}$, which have a role in the solution of
quadratic matrix equations and of linear systems when $n$ is not of
the kind $2^k-1$:
\begin{equation}\label{cr}
\begin{split}
&A_0^{(h+1)}=A_0^{(h)}-A_1^{(h)}S^{(h)}A_{-1}^{(h)}-A_{-1}^{(h)}S^{(h)}
A_1^{(h)},\quad S^{(h)}=(A_0^{(h)})^{-1}\\
&A_1^{(h+1)}=-A_1^{(h)}S^{(h)}A_1^{(h)},\quad A_{-1}^{(h+1)}=-A_{-1}^{(h)}S^{(h)}A_{-1}^{(h)},\qquad h=0,1,\ldots\\
&\widehat A^{(h+1)}=\widehat A^{(h)}-A_1^{(h)}S^{(h)}A_{-1}^{(h)},\quad \widetilde A^{(h+1)}=\widetilde A^{(h)}-A_{-1}S^{(h)}A_1^{(h)}
\end{split}
\end{equation}
with $A_0^{(0)}=\widetilde A^{(0)}=\widehat A^{(0)}=A_0$, $A_1^{(0)}=A_1$, $A_{-1}^{(0)}=A_{-1}$.

Here we assume that all the matrices $A_0^{(h)}$ generated by the
recursion are invertible so that CR can be carried out with no
breakdown. This assumption is generally satisfied in the applications.

In many cases of great interest, encountered for instance in the
analysis of bi-dimensional random walks, queuing models, network
analysis \cite{LatRam}, \cite{neuts}, \cite{Miya}, \cite{Miya1},
\cite{tandem}, and finite differences discretization of elliptic PDEs
\cite{chandra}, the blocks $A_{-1}$, $A_0$ and $A_1$ are tridiagonal
or, more generally, banded matrices.  This has raised great attention
to the computational analysis of this case.  The additional
tridiagonal structure makes it much cheaper to perform the first steps
of CR where the computational cost drops from $O(n^3)$ to $O(n)$
ops. However, after a few steps, the sparse structure of the initial
blocks is lost and one has to deal with full, apparently unstructured
matrices $A_i^{(h)}$, $i=-1,0,1$.

Recently, in \cite{netna}, it has been observed that if $A_{-1},A_0$
and $A_1$ are tridiagonal, then the matrices $A_i^{(h)}$, even if
dense, numerically maintain a property of {\em
  quasi-separability}. That is, their submatrices contained in the strict
upper triangular part or in the strict lower triangular part, called {\em
  off-diagonal submatrices}, have a ``small'' numerical rank. More
formally, it has been proved that if $\sigma_{i,h}$ are the singular
values of any off-diagonal submatrix of, say, $A_0^{(h)}$, ordered in
non-increasing order, then $\sigma_{i,h}\le \gamma t^{i/2}$ for some
small $\gamma>0$ and for some $0<t<1$. The value of $t$ is such that
the matrix $z^2A_1+zA_0+A_{-1}$ is invertible for any complex $z$ such
that $t<|z|<t^{-1}$.

The analysis of \cite{netna} provides a theoretical explanation of an
observed computational property which enables one to implement CR with
a high computational efficiency by relying on the properties of
quasiseparable matrices \cite{vanbarel:book1},
\cite{vanbarel:book2}. In fact, an efficient implementation of CR has
been given based on the software library \cite{borm} of hierarchical
quasiseparable matrices, and the numerical experiments show the high
effectiveness of this approach. 

However, the results of \cite{netna} provide an under estimate of the
decay properties of the singular values of the off-diagonal blocks of
$A_{-1}^{(h)}$, $A_0^{(h)}$ and $A_1^{(h)}$. In fact, it turns out
that, even in the cases where the matrix polynomial
$z^2A_1+zA_0+A_{-1}$ is singular at some point just outside a thin
annulus $\mathbb A_t=\{z\in\mathbb C:\quad t\le |z|\le t^{-1}\}$
obtained with some $t$ very close to 1, the observed exponential decay
of the singular values is still evident with a basis of the
exponential much smaller than the given theoretical bound $t$.

A typical example is given by the discrete Laplacian matrix where
$A_0=\hbox{trid}(-1,4,-1)$, $A_{-1}=A_1=-I$ so that
$t=1-1/(n+1)+O(1/(n+1)^2)$. In this case, for moderately large values
of $n$, the bound $t^j$ is still close to 1 for values of $j$ as large
as $n$.  As a result, the plot of the upper bounds to the singular
values would be an almost horizontal line.  On the other hand from the
numerical experiments it turns out that the decay of the singular
values is still exponential despite the width of the annulus collapses
to zero, and the basis of the exponential is much less than $t$ and
almost independent of $n$.

With the tools introduced in this paper, we can capture this property
as shown in Figure \ref{fig:poisson} where the decay of the
off-diagonal singular values of the matrix $H_0=\lim_h A_0^{(h)}$,
together with their theoretical upper bounds, are displayed.

   \begin{figure} 
    	\begin{center}
    	\begin{tikzpicture}% file
    	\begin{semilogyaxis}[
    	  xlabel=$l$,
    	  width=.72\linewidth,
    	  height=.36\textheight,
    	  ylabel=Singular values ($\sigma_l$),
    	  ymin=1e-20,
    	  xmax=25,
    	  legend pos = south west]
    	\addplot table[x index=0, y index = 1] {poisson.dat};
    	\addplot table[x index=0, y index = 2] {poisson.dat};
    	\addplot[domain=1:25,green,mark=triangle] 
    	  {260.65 * (0.995)^(x-1) };
    	\legend{Singular values, Proposition~\ref{prop:poisson},
    		Bound from~\cite{netna}}
    	\end{semilogyaxis}
    	\end{tikzpicture}
    	\end{center}   	
        \caption{This graph displays the singular values of the
          largest off-diagonal block of the matrix $H_0=\lim_h
          A_0^{(h)}$ computed by means of CR for the Poisson problem
          where $m=200$. The red squares denote the upper bound, the
          blue discs the computed values, the green triangles the
          bound from \cite{netna}. The exponential decay and the
          sharpness of the bound are evident.}
    	\label{fig:poisson}
     \end{figure}
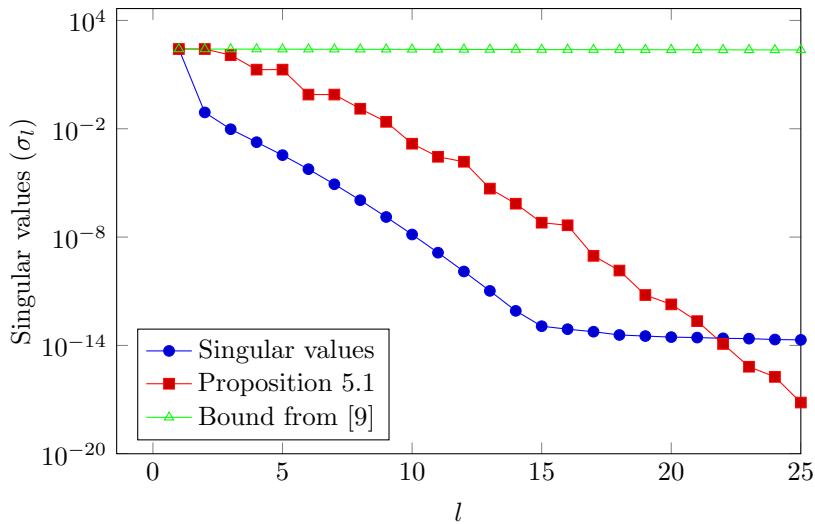
In fact, in this paper we provide a different theoretical explanation
of the exponential decay of the singular values which relies on an
unpublished result of B.~Beckermann \cite{beck2004} where the decay of
certain singular values associated with some Krylov matrix is
expressed in terms of the accuracy of a rational function
approximation problem.

This analysis leads to a fast algorithm, which we call {\em
  quasiseparable CR} (QCR for short), for solving an $n\times n$ block tridiagonal
block Toeplitz system, where the matrix $\hbox{trid}_n(B,A,C)$ has
quasiseparable, say tridiagonal, $m\times m$ blocks $A,B,C$. The cost
of the algorithm is $O(mn\log m+m\log^2 m\log n)$ ops which reduces to
$O(n^2\log n)$ ops for $m=n$. This cost is comparable with that of the
fast Poisson solvers \cite[Sect. 4.8.4]{golub-vanloan}, which apply to
the case where $A=\hbox{trid}_n(-1,4,1)$, $B=C=-I$. Unlike the latter
algorithms, quasiseparable CR covers a wider and more general class of
cases.

We show also an application of QCR to solving a generalized Sylvester
equation of the kind
\[
\sum_{i=1}^k A_iX B_i=C
\]
for given matrices $A_i,B_i$ and $C$ of compatible sizes, in the case
where $A_i$ are tridiagonal Toeplitz, and $B_i$ are quasiseparable
matrices. In fact, in this case, the problem is reduced to solve a
block tridiagonal block Toeplitz system with quasiseparable
blocks. The cost of the solution is again $O(n^2\log n)$ ops where,
for simplicity, we assume that all the matrices involved are $n\times
n$.

Decay properties of the off-diagonal blocks of matrix functions have
been recently received much attention. In particular, in the paper by
M.~Benzi and P.~Boito and N.~Razouk \cite{benzi2013decay} the decay
properties of spectral projectors associated with large and sparse
Hermitian matrices are investigated. In \cite{kresuns} D. Kressner and A. Susnjara prove a priori bounds for the numerical rank of the off-diagonal blocks of spectral projectors ---associated with symmetric banded matrices--- by using the best rational approximant of the sign function.  In \cite{benzi2014decay}
M.~Benzi and P.~Boito extend previous results on the exponential
off-diagonal decay of the entries of analytic functions of banded and
sparse matrices to the case where the matrix entries are elements of a
$C^*$-algebra.  M.~Benzi and V.~Simoncini \cite{benzi-simoncini} find
decay bounds for completely monotonic functions of matrices which are
the Kronecker sum of banded or sparse matrices.  While C.~Canuto,
V.~Simoncini and M.~Verani \cite{canuto} analyze the decay pattern of
the inverses of banded matrices of the form $I\otimes M+M\otimes I$
where $M$ is tridiagonal, symmetric and positive definite.  In
\cite{chandra}, S.~Chandrasekaran, P.~Dewilde, M.~Gu, and
N.~Somasunderam analyze the numerical rank of the off-diagonal blocks
in the Schur complements of block tridiagonal block Toeplitz systems
discretizing bi-dimensional elliptic equations.

The paper is organized as follows. In Section \ref{sec:prel} we
provide some preliminary results including the main properties of CR,
its functional interpretation, and the definitions of
$k$-quasiseparable matrices and of hierarchical $k$-quasiseparable
matrices.
Section \ref{sec:laurent} concerns the analysis of the properties of
the matrix coefficients in the Laurent expansion of the matrix
function $\psi(z)=\varphi(z)^{-1}$, where
$\varphi(z)=z^{-1}A_{-1}+A_0+zA_1$. In fact, this matrix function
captures the structural and computational properties of CR. Its domain
of analyticity is the annulus $\mathbb A_t$ whose width has been used
in \cite{netna} to prove the exponential decay. The main result of
this section is Lemma \ref{lem:addends} where we show that any
off-diagonal block of $\psi(z)$ can be written as the sum of 4 terms;
each term is the product of a Krylov matrix and of a transposed Krylov
matrix.

In Section \ref{sec:disp} ---relying on a result by B.~Beckermann--- we
provide a bound to the singular values of a matrix which satisfies a
suitable displacement equation. Then we apply this result to find
sharp bounds to the singular values of the off-diagonal blocks of
$\psi^{(h)}(z)$ and we extend these bounds to the block $A_i^{(h)}$
and to the limit value $\lim \psi^{(h)}(z)=A_0^{(\infty)}$. The main results of this section are given in Theorems \ref{thm:thm} and \ref{prop:limit}.

Section \ref{sec:valid} deals with the experimental validation of the
theoretical bounds to the decay.
In Section \ref{sec:appl} we show applications of the quasiseparable
CR to solving block tridiagonal block Toeplitz systems and to solving
certain generalized Sylvester equations.  We report also the results
of some numerical experiments where the above applications are
tested. Finally, Section \ref{sec:conclusion} draws the conclusions.

   \section{Some preliminaries}\label{sec:prel}
Throughout, $\mathbb Z$ and $\mathbb N$ denote the set of relative
integers and of natural numbers, respectively, while $\mathbb R$ and
$\mathbb C$ denote the complex and the real field, respectively.  We
recall the fundamental properties of CR and of quasiseparable
matrices.
   
Cyclic reduction can be formulated in functional form in terms of two
matrix Laurent series $\varphi(z)$ and $\psi(z)$, namely,
   \[
     \varphi(z) = z^{-1} A_{-1} + A_0 + z A_1, \qquad 
     \psi(z) = \varphi(z)^{-1},
   \]
where $\psi(z)$ is defined in the set where $\varphi(z)$ is
invertible.  Here and hereafter, we assume that $\varphi(z)$ is
invertible in the annulus $\mathbb A_t=\{z\in\mathbb C:\quad t\le
|z|\le t^{-1}\}$ for some $0<t<1$. This assumption is generally
verified in the applications. In certain cases, by means of scaling
the matrices $A_{1}$ and $A_{-1}$ by suitable constants $\alpha$ and
$\alpha^{-1}$, respectively, one can meet this assumption. Throughout
we denote $\mathbb T=\mathbb A_1$ the unit circle in the complex
plane.
   
We recall the following property which is fundamental for our
analysis, see for instance \cite{SMC} and \cite{netna}.

\begin{prop}\label{prop:1}  
Let $A_{-1},A_0,A_1$ be $n\times n$ matrices such that CR can be
carried out. Define
$\varphi_h(z)=z^{-1}A_{-1}^{(h)}+A_0^{(h)}+zA_1^{(h)}$, where
$A_i^{(h)}$ are the matrices generated by \eqref{cr}, and set
$\psi_h(z)=\varphi_h(z)^{-1}$. Then
   \[
     \psi^{(h)}(z^{2^h}) = \frac{1}{2^h} \sum_{j = 1}^{2^h}
     \psi(\xi_{2^h}^j z)
   \]
where $\xi_{2^h}$ is a primitive $2^h$-th root of the unity.
   \end{prop}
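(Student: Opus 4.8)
The plan is to establish the identity by induction on $h$, tracking how the cyclic reduction recurrences \eqref{cr} act on the functional object $\psi(z)=\varphi(z)^{-1}$. The base case $h=0$ is immediate, since the right-hand side is just $\psi(z)$ and $\psi^{(0)}=\psi$. For the inductive step, the central observation is that the CR recurrences in \eqref{cr} can be read off from a single functional identity relating $\varphi_{h+1}$ to $\varphi_h$. First I would verify that the matrix Laurent polynomials satisfy
\[
\varphi_{h+1}(z^2) = \varphi_h(z)\,S^{(h)}\,\varphi_h(-z),
\qquad S^{(h)}=(A_0^{(h)})^{-1},
\]
by expanding the right-hand side and collecting the coefficients of $z^{-2}$, $z^0$ and $z^2$; these coefficients reproduce exactly $A_{-1}^{(h+1)}$, $A_0^{(h+1)}$ and $A_1^{(h+1)}$ as defined in \eqref{cr}. (The odd powers $z^{\pm 1}$ cancel because $\varphi_h(-z)$ flips the sign of the off-diagonal terms.)

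Inverting this functional identity gives the key relation for the resolvents,
\[
\psi_{h+1}(z^2) = \tfrac12\bigl(\psi_h(z)+\psi_h(-z)\bigr),
\]
which I would derive by computing $\varphi_h(z)^{-1}A_0^{(h)}\varphi_h(-z)^{-1}$ and recognizing it as the even part of $\psi_h$; concretely, one checks that $\psi_h(z)+\psi_h(-z)=2\,\psi_h(z)A_0^{(h)}\psi_h(-z)$ using $\varphi_h(z)+\varphi_h(-z)=2A_0^{(h)}$. This expresses $\psi_{h+1}$ evaluated at $z^2$ as the averaging of $\psi_h$ over the two square roots of $z^2$, i.e.\ over multiplication by the primitive square root of unity $-1$.

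The inductive step then combines this averaging relation with the induction hypothesis for $h$. Substituting $z \mapsto z^{2^h}$ in the resolvent identity and using $\psi^{(h)}(z^{2^h}) = \frac{1}{2^h}\sum_{j=1}^{2^h}\psi(\xi_{2^h}^j z)$, I would obtain
\[
\psi^{(h+1)}(z^{2^{h+1}})
= \tfrac12\Bigl(\psi^{(h)}(z^{2^h}) + \psi^{(h)}((-z)^{2^h}\,)\Bigr)
= \frac{1}{2^{h+1}}\sum_{j=1}^{2^h}\bigl(\psi(\xi_{2^h}^j z)+\psi(-\xi_{2^h}^j z)\bigr).
\]
The final step is purely combinatorial: the $2^h$ roots $\xi_{2^h}^j$ together with their negatives $-\xi_{2^h}^j$ enumerate precisely the $2^{h+1}$-th roots of unity $\xi_{2^{h+1}}^j$, each exactly once, since $-1=\xi_{2^{h+1}}^{2^h}$ and $\xi_{2^h}=\xi_{2^{h+1}}^2$. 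This reindexing collapses the sum into $\frac{1}{2^{h+1}}\sum_{j=1}^{2^{h+1}}\psi(\xi_{2^{h+1}}^j z)$, completing the induction.

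I expect the main obstacle to be the bookkeeping in the factorization $\varphi_{h+1}(z^2)=\varphi_h(z)S^{(h)}\varphi_h(-z)$: one must confirm that the noncommutative products of the matrix coefficients reassemble exactly into the three CR formulas for $A_i^{(h+1)}$, paying attention to the order of factors since the $A_i^{(h)}$ need not commute. Once that factorization is in hand, the resolvent averaging and the root-of-unity reindexing are routine.
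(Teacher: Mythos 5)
The paper does not actually prove Proposition~\ref{prop:1}: it is quoted from the literature (\cite{SMC}, \cite{netna}), and the standard proof there is precisely your route --- the factorization $\varphi_{h+1}(z^2)=\varphi_h(z)S^{(h)}\varphi_h(-z)$ (which you set up correctly: the coefficients of $z^{-2},z^{0},z^{2}$ reproduce \eqref{cr} and the odd powers cancel), the resulting resolvent identity $\psi_{h+1}(z^2)=\tfrac12\bigl(\psi_h(z)+\psi_h(-z)\bigr)$, and induction on $h$. So the strategy is the right one.

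However, your inductive step contains a genuine error in the final reindexing. Applying the averaging identity at $w=z^{2^h}$ gives $\psi^{(h+1)}(z^{2^{h+1}})=\tfrac12\bigl(\psi^{(h)}(z^{2^h})+\psi^{(h)}(-z^{2^h})\bigr)$, and $-z^{2^h}\neq(-z)^{2^h}$ once $h\ge 1$ (the latter equals $z^{2^h}$). Consequently your claim that the points $\xi_{2^h}^j$ together with their negatives $-\xi_{2^h}^j$ enumerate the $2^{h+1}$-th roots of unity is false for $h\ge 1$: since $(-1)^{2^h}=1$, each $-\xi_{2^h}^j$ is again a $2^h$-th root of unity, so your union has only $2^h$ elements, each hit twice, and your displayed sum collapses back to $\psi^{(h)}(z^{2^h})$ rather than producing $\psi^{(h+1)}(z^{2^{h+1}})$. (Check $h=1$: your formula yields $\tfrac12(\psi(z)+\psi(-z))$ instead of $\tfrac14\sum_{j=1}^{4}\psi(i^jz)$.) The repair is short: write $-z^{2^h}=(\xi_{2^{h+1}}z)^{2^h}$ using $\xi_{2^{h+1}}^{2^h}=-1$, apply the induction hypothesis at the point $\xi_{2^{h+1}}z$ to get $\psi^{(h)}(-z^{2^h})=\frac{1}{2^h}\sum_{j}\psi(\xi_{2^h}^j\xi_{2^{h+1}}z)$, and observe that $\xi_{2^h}^j=\xi_{2^{h+1}}^{2j}$ and $\xi_{2^h}^j\xi_{2^{h+1}}=\xi_{2^{h+1}}^{2j+1}$ run over the even and odd powers of $\xi_{2^{h+1}}$ respectively, hence together over all $2^{h+1}$-th roots of unity exactly once. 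With that substitution the induction closes as you intended.
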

   
The following definitions are fundamental to formalize the fast decay
of the singular values of the off-diagonal submatrices generated by
CR. We say that an $m\times m$ matrix $A$ is $k$-{\em quasiseparable}
if all the submatrices contained in the strict upper triangular part or in
the strict lower triangular part have rank at most $k$ and there exists at
least one submatrix with rank $k$. We say also that $k$ is the {\em
  quasiseparable rank} of $A$.
   
We say that $A$ is \emph{hierarchically $k$-quasiseparable} if either
$m\leq k$ or there exists a $2 \times 2$ block partitioning of the
matrix such that the diagonal blocks are square and have size $\lfloor
\frac m2\rfloor$ and $\lceil \frac m2 \rceil$, respectively, the
off-diagonal blocks have rank at most $k$ and the diagonal blocks are
hierarchically $k$-quasiseparable.  Moreover, in this recursive
partitioning there exists an off-diagonal submatrix of rank exactly $k$.

 This partitioning leads
to the simplest hierarchical representation, known in the literature
as \emph{hierarchically off-diagonal low rank} (HODLR), which is the
one exploited in \cite{netna} for speeding up CR.
         
The following result states that if the singular values of the
off-diagonal blocks of $A$ decay {\em fast}, then $A$ is close to a
hierarchical quasiseparable matrix. That is, for a relatively small $k$  there is a perturbation $\delta A$
of {\em small} norm such that $A+\delta A$
is hierarchically $k$-quasiseparable.

       \begin{thm}\label{thm:perturb}
         Let $f(l)$ be a function over the positive integers, and let
          $A\in\mathbb C^{m\times m} $ be a matrix such that
           $\sigma_l(B)\leq f(l)$
         for every off-diagonal block $B$ in $A$. 
         Then, for any $l$  
         there exists a perturbation matrix 
         $\delta A$ such that 
         $A + \delta A$ is hierarchical quasiseparable of rank at most $l$
   	     and 
    	  $\norm{\delta A}_2 \leq  f(l) \cdot  \log_2 m$.
       \end{thm}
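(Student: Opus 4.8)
The plan is to build $\delta A$ by truncating, block by block, the singular value decomposition of every off-diagonal block appearing in the recursive HODLR partitioning of $A$. Concretely, for each off-diagonal block $B$ occurring at some level of the partitioning, let $\wh B$ be its best rank-$l$ approximation, obtained by retaining the $l$ largest singular triples. By the Eckart--Young theorem the error is $\norm{B-\wh B}_2=\sigma_{l+1}(B)$, and since the singular values are arranged in non-increasing order, $\sigma_{l+1}(B)\leq\sigma_l(B)\leq f(l)$ by hypothesis. Replacing every off-diagonal block $B$ by $\wh B$ produces a matrix $A+\delta A$ that is hierarchically $l$-quasiseparable by construction, the diagonal leaves of size at most $l$ needing no modification. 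It then remains only to control $\norm{\delta A}_2$.

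First I would organize the perturbation by levels of the partitioning tree. Halving the size at each step, the recursion reaches leaves of size at most $l$ after at most $\log_2 m$ levels, so I write $\delta A=\sum_{k=1}^{L}\delta A_k$ with $L\leq\log_2 m$, where $\delta A_k$ collects the truncation errors of all off-diagonal blocks sitting at level $k$. The key structural observation is that, at a fixed level $k$, these off-diagonal blocks occupy pairwise disjoint sets of row and column indices: each of the $2^{k-1}$ diagonal sub-blocks inherited from level $k-1$ contributes exactly one upper and one lower off-diagonal block, both confined to that sub-block. Hence, up to a common permutation, $\delta A_k$ is block diagonal, with diagonal blocks of the anti-triangular form $\left(\begin{smallmatrix}0 & E\\ E' & 0\end{smallmatrix}\right)$, where $E$ and $E'$ are the two truncation errors.

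The main point, and the step where care is needed, is the spectral norm of $\delta A_k$. A matrix $M=\left(\begin{smallmatrix}0 & E\\ E' & 0\end{smallmatrix}\right)$ satisfies $M^*M=\mathrm{diag}\bigl((E')^*E',\,E^*E\bigr)$, so its singular values are the union of those of $E$ and $E'$, giving $\norm{M}_2=\max(\norm{E}_2,\norm{E'}_2)$. Because $\delta A_k$ is block diagonal, its norm is in turn the maximum over its diagonal blocks, so $\norm{\delta A_k}_2=\max_B\norm{B-\wh B}_2\leq f(l)$, the maximum ranging over the off-diagonal blocks at level $k$. This is exactly what keeps the estimate from degrading: the per-level norm is a \emph{maximum}, not a sum, over the many blocks at that level, because their disjoint supports prevent the truncation errors from accumulating additively. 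Applying the triangle inequality across the $L\leq\log_2 m$ levels then yields $\norm{\delta A}_2\leq\sum_{k=1}^{L}\norm{\delta A_k}_2\leq f(l)\,\log_2 m$, which is the claim. The only genuinely non-routine ingredient is the collapse of each level-$k$ error to a single maximum; the Eckart--Young bound, the inequality $\sigma_{l+1}\leq\sigma_l$, and the count of levels are all standard.
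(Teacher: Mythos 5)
Your proposal is correct and follows essentially the same route as the paper's own proof: truncate the SVD of each off-diagonal block in the HODLR partitioning, group the truncation errors by level, observe that each level's contribution is a permutation of a block-diagonal matrix whose norm is the \emph{maximum} (not the sum) of the individual errors, and apply the triangle inequality over the at most $\log_2 m$ levels. The only cosmetic difference is that you truncate to rank $l$ and invoke $\sigma_{l+1}\leq\sigma_l\leq f(l)$, whereas the paper truncates to rank $l-1$ and uses $\sigma_l(B)\leq f(l)$ directly; both give the stated bound.
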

       
       \begin{proof} 
First, recall that if the nonzero singular values of a matrix $B$ are
$\sigma_1\ge\sigma_2\ge\ldots\ge \sigma_k$ then, for any $j\le k$ we
may write $B$ as a matrix of rank $j$ plus a perturbation $\delta B$
such that $\|\delta B\|_2=\sigma_{j+1}$. Now consider an HODLR like partitioning of $A$ with minimal blocks of dimension $l$.  Notice that the depth of this
recursive partition is $\sigma=\lceil \log_2(\frac{m}{l})\rceil$.
This way, for each off-diagonal block $B$ of this partitioning and for
any integer $j$, there exists a perturbation matrix that makes this
block of rank $j$.  The 2-norm of this perturbation is equal to
$\sigma_{j+1}(B)\le f(j+1)$.  We may form the matrix $\delta A$ which
collects all these perturbations of each off-diagonal block of the
above partitioning. This way, if $j=l-1$, the off-diagonal blocks of
$A + \delta A$ have rank at most $l$.  We can now show that
$\norm{\delta A}_2 \leq f(l) \cdot \log_2 m$. We have
         \[
           \delta A = \sum_{i = 0}^{\sigma} \delta A_i,\qquad \sigma\leq \log_2 m,
         \]
 where $\delta A_i$ is the correction obtained by putting together all
 the blocks at level $i$ of subdivision, that is,
         \[
           \delta A_0 = 
             \begin{bmatrix}
               0 & \delta X_1^{(0)} \\
               \delta X_2^{(0)} & 0 \\
             \end{bmatrix}, \qquad 
           \delta A_1 = \left[\begin{array}{cc|cc}
             0 & \delta X_1^{(1)}& & \\
             \delta X_2^{(1)} & 0& & \\\hline
             &&0 & \delta X_3^{(1)} \\
             && \delta X_4^{(1)} & 0\\
           \end{array}\right], \qquad \ldots \ 
         \]
         
 Since the summands are just permutations of block diagonal matrices
 their $2$-norm is the maximum of the $2$-norms of the (block)
 diagonal entries, and this gives the desired bound.
       \end{proof}

Thus, our aim is to prove that the matrix function $\psi^{(h)}(z)$
defined in Proposition \ref{prop:1}, has off-diagonal blocks with
singular values which decay {\em exponentially} to zero so that the
assumptions of Theorem \ref{thm:perturb} are satisfied with
$f(l)=e^{-\alpha l}$ for some positive $\alpha$. This decay property
is then extended to $\varphi_h(z)$ by inversion and finally to the
blocks $A_{-1}^{(h)}, A_0^{(h)},A_1^{(h)}$ by means of
interpolation. More details on this technique are given in
\cite{netna}.

The estimates of the parameter $\alpha$ given in the paper
\cite{netna} depend on the value $t$ which defines the domain $\mathbb
A_t$ of invertibility of the matrix $\varphi(z)$. If $t$ gets close to
$1$, then $\alpha$ takes values close to 0, and the theoretical bound
of the exponential decay loses its sharpness. Here, we introduce a
different analysis which better fits with the decay observed in the
numerical experiments.
   
We define the following class of problems for which the matrices
$A_i^{(h)}$, $i=-1,0,1$ generated by CR through \eqref{cr} have the
exponential decay of the singular values in their off-diagonal blocks
at any step $h$ of the iteration.
   
    \begin{defn}
Let $\varphi(z) = z^{-1}A_{-1} + A_0+ z A_1$, where $A_{-1},A_0,A_1$
are $m\times m$ matrices with entries in $\mathbb C$, be such that CR
can be applied with no breakdown by means of \eqref{cr}.  Let $f(l)$
be a positive function in $l^1(\mathbb N)$.  We say that $\varphi(z)$
is \emph{$f$-decaying-quasiseparable} if, $\forall h \in \mathbb N$,
$\forall z\in \mathbb T$ and for every off-diagonal block $\wt
C^{(h)}(z)$ of $\psi^{(h)}(z)$, we have
         \[
           \sigma_l(\wt C^{(h)}(z)) \leq  \norm{\psi^{(h)}(z)}_2 \cdot f(l) , 
         \]
where $\sigma_l(\wt C^{(h)}(z))$ denotes the $l$-th singular value of
the matrix $\wt C^{(h)}(z)$.  We define the set of such matrix
functions $\varphi(z)$ as $\dq{f}$.
       \end{defn}

%       \begin{rem}
%       The presence of the term $\norm{\psi^{(h)}(z)}_2$ in the previous definition says that there exists a relatively small perturbation of $\psi^{(h)}(z)$ that makes it hierarchically quasiseparable.
%       
%       {\tt Perch\'e? Non penso sia vero alla luce delle modifiche al thm 1.3. Toglierei}
%       \end{rem}
 
\section{Laurent coefficients of an off-diagonal block}\label{sec:laurent}
In this section, we consider the matrix Laurent series expansion of
$\psi(z)$, that is, $\psi(z)=\sum_{i=-\infty}^{+\infty}z^iH_i$ for
$z\in\mathbb A_t$, which exists and is convergent since $\psi(z)$ is
analytic in the domain $\mathbb A_t$ where $\varphi(z)$ is analytic
and non-singular. Then we will analyze the properties of the
coefficients of an off-diagonal block of this Laurent series.

We define the eigenvalues of $\varphi(z)$ as the roots of the
polynomial $p(z)=\det (A_{-1}+zA_0+z^2A_1)$.  Observe that if $\det
A_1\ne 0$ the polynomial $p(z)$ has degree $d=2m$ so that there are
$2m$ roots. If, on the other hand, $\det A_1=0$ then $d<2m$ and for
this reason, we add to the $d$ roots of $p(z)$ other $2m-d$ roots at
the infinity. In this way we can say that $\varphi(z)$ has always $2m$
eigenvalues including possible eigenvalues at the infinity.
     
 Here we assume that the eigenvalues $\xi_i$, $i=1,\ldots,2m$ of
 $\varphi(z)$ satisfy the balanced splitting property with respect to
 the unit circle
              \begin{equation}\label{splitting}
                   |\xi_1|\leq\dots\leq |\xi_m|<t<1<t^{-1}<|\xi_{m+1}|\leq\dots\leq |\xi_{2m}|. 
                   \end{equation}
We call $t$ the radius of the splitting.  The splitting property
\eqref{splitting} is needed to guarantee the applicability of CR and
that the convergence to zero of the blocks $A_{-1}^{(h)}$ and
$A_1^{(h)}$ is doubly exponential \cite{SMC}.

Consider the following partitioning of $\psi(z)$ and $\varphi(z)$
       \[
       \varphi(z)=\left(\begin{array}{cc}A(z)&B(z)\\ C(z)&D(z)\end{array}\right), \qquad \psi(z)=\left(\begin{array}{cc}\wt A(z)&\wt B(z)\\ \wt C(z)&\wt D(z)\end{array}\right)=\left(\begin{array}{cc} S_{D}(z)^{-1} & *\\ -D(z)^{-1}C(z)S_{D}(z)^{-1}&*\end{array}\right),
       \] 
where the diagonal blocks are square,
$S_{D}(z)=A(z)-B(z)D(z)^{-1}C(z)$ is the Schur complement of $D(z)$,
and $*$ denotes blocks which are not relevant for our analysis.
       
Moreover, suppose that the splitting \eqref{splitting} holds also for
the eigenvalues of $D(z)$ --this is true for problems from stochastic
models which are ruled by M-matrices-- and assume that the matrix
coefficients $ A_i$ have quasiseparable rank $k$ for $i=-1,0,1$. These
hypotheses are always satisfied for a large class of important
problems like not null recurrent Quasi Birth-Death problems (QBDs)
with banded blocks, up to rescaling the coefficients \cite{SMC}. This
guarantees that the matrix functions $\varphi(z)$ and $D(z)$ are
invertible in the annulus $\mathbb A_{t}$ for some $t<1$.

%       \begin{rem} \label{rem:smith-decomp}
Observe that, since the off-diagonal blocks of $A_i$ have rank at most
$k$ for $i=-1,0,1$, then any off-diagonal block $C(z)$ of $\varphi(z)$
can be written as 
\[ C(z) = z^{-1} U_{-1} V_{-1}^t + U_0 V_0^t + z U_1
V_1^t, \qquad \norm{U_i} = \norm{A_i}, \quad \norm{V_i} = 1,
         \]
where $U_i$ and $V_i$ have $k$ columns and the superscript $t$ denotes 
transposition. 
%       \end{rem}

Defining 
      \[
      U=\begin{bmatrix}
      U_{-1}&\vline& U_0&\vline&U_1
      \end{bmatrix},
      \quad 
      V(z)=\begin{bmatrix}
       z^{-1}V_{-1}&\vline& V_0&\vline&zV_1
       \end{bmatrix},
      \] 
we can write $\wt C(z)=-\wt U(z) \wt V(z)^t$, where $\wt
U(z)=D(z)^{-1} U$ and $\wt V(z) =S_{D}(z)^{-t}V(z)$.  Observe that
$S_{D}(z)^{-1}$ is the upper left diagonal block of $\psi(z)$. This
gives us a crucial information on the coefficients of the matrix
Laurent series expansion of $D(z)^{-1}$ and $S_{D}(z)^{-1}$. In order
to perform this analysis we have to recall a general result which
provides an explicit expression of the coefficients $H_i$ of the
Laurent expansion of $\psi(z)$.
     
     \begin{thm}[Part of Theorem 3.20 in \cite{SMC}]
       \label{thm:fourier-coefficients-kylov}
Let $\varphi(z)=z^{-1}A_{-1}+A_0+zA_1$ with $A_i\in\mathbb{R}^{m\times
  m}$, $i=-1,0,1$ and assume that the eigenvalues $\xi_i$,
$i=1,\dots,2m$ of $\varphi(z)$ satisfy \eqref{splitting}.  Moreover
suppose that there exist $R$ and $\wh R$ with spectral radius less
than $1$ which solve the matrix equations
     \begin{align}\label{eq:mateqR}
     A_1+XA_0+X^2A_{-1}&=0,\\
     X^2A_1+XA_0 +A_{-1}&=0,
     \end{align}
respectively.  Then there exist $G$ and $\wh G$ solutions of the
reversed matrix equations
     \begin{align}\label{eq:mateqG}
       A_1X^2+A_0X +A_{-1}&=0,\\
       A_1+A_0X+A_{-1}X^2&=0,
     \end{align}
respectively, with spectral radius less than 1. Moreover, expanding
$\varphi(z)^{-1}=\sum\limits_{j=-\infty}^{+\infty}z^j H_j$ yields
     \[
     H_j=
     \begin{cases}
     H_0 \wh R^{-j} = G^{-j} H_0  & j\leq0\\
     H_0  R^j = \wh G^j H_0 & j\geq0
     \end{cases}, \quad 
     G = H_{-1} H_0^{-1},~ \wh G = H_1 H_0^{-1},~ 
     R = H_0^{-1} H_1,~ \wh R = H_0^{-1} H_{-1}. 
     \]
The spectrum of $G$ and $\wh R$ is formed by the eigenvalues of
$\varphi(z)$ inside the unit disc, the spectrum of $\wh G$ and $R$ is
formed by the reciprocals of the eigenvalues of $\varphi(z)$ outside
the unit disc.
     \end{thm}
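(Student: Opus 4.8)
The plan is to exploit the identities $\varphi(z)\psi(z)=\psi(z)\varphi(z)=I$ on the annulus $\mathbb A_t$. Writing $\psi(z)=\sum_{j}z^jH_j$, a Laurent series that converges on a neighbourhood of $\mathbb T$ because $\varphi$ is analytic and non-singular on $\mathbb A_t\supsetneq\mathbb T$, and equating coefficients of $z^k$ in $\varphi(z)\psi(z)=I$ gives the three-term recurrence
\[
  A_1H_{k-1}+A_0H_k+A_{-1}H_{k+1}=0,\qquad k\neq 0,
\]
together with the inhomogeneous relation $A_1H_{-1}+A_0H_0+A_{-1}H_1=I$ at $k=0$; multiplying by $\varphi$ on the other side produces the mirror recurrence $H_{k-1}A_1+H_kA_0+H_{k+1}A_{-1}=0$. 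Since the series converges on an annulus strictly containing $\mathbb T$, its coefficients decay geometrically, $\norm{H_k}\leq C\,r^{|k|}$ with $r<1$; this decay is what will single out the relevant solution of the recurrence.

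The heart of the argument is that the decaying tail $\{H_k\}_{k\geq 0}$ is determined by the reversed solvent equation $A_1+A_0\wh G+A_{-1}\wh G^2=0$ of \eqref{eq:mateqG}. Substituting the ansatz $H_k=\wh G^kH_0$ into the recurrence for $k\geq 1$ collapses it to $(A_1+A_0\wh G+A_{-1}\wh G^2)\wh G^{k-1}H_0=0$, which holds exactly because $\wh G$ is a solvent; as $\rho(\wh G)<1$ this gives a geometrically decaying solution. Conversely, the space of solutions of the recurrence that decay as $k\to+\infty$ is exactly $\{\wh G^kX:X\in\mathbb C^{m\times m}\}$, so uniqueness of the decaying tail forces $H_k=\wh G^kH_0$ for all $k\geq 0$, and matching at $k=0,1$ identifies $\wh G=H_1H_0^{-1}$. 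Equivalently, summing the series yields the generating function $\sum_{k\geq 0}z^kH_k=(I-z\wh G)^{-1}H_0$, whose poles are the reciprocals of the eigenvalues of $\varphi$ outside $\mathbb T$; this determines $\operatorname{spec}(\wh G)$, and the similarity $\wh G=H_0RH_0^{-1}$ coming from $R=H_0^{-1}H_1$ gives both $H_k=H_0R^k$ and $\operatorname{spec}(R)$. The mirror recurrence and the analogous computation on $\{H_{-k}\}_{k\geq 0}$, via $\sum_{k\geq 1}z^{-k}H_{-k}=G(zI-G)^{-1}H_0$, yield $H_{-k}=G^kH_0=H_0\wh R^k$ and identify $\operatorname{spec}(G)$ and $\operatorname{spec}(\wh R)$ with the eigenvalues of $\varphi$ inside $\mathbb T$. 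The invertibility of $H_0$, needed to pass between $G,R,\wh G,\wh R$, follows from $H_0=\frac1{2\pi i}\oint_{\mathbb T}\varphi(z)^{-1}\frac{dz}{z}$ together with the factorization of $\varphi$ described below.

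What remains, and what I expect to be the main obstacle, is the existence of $G$ and $\wh G$ with $\rho(G),\rho(\wh G)<1$: these appear only in the conclusion, whereas the hypotheses provide $R,\wh R$. The plan is to obtain them from a canonical Wiener--Hopf factorization of $\varphi$ across $\mathbb T$, which exists because $\varphi$ is non-singular on $\mathbb T$ and its $2m$ eigenvalues split evenly by \eqref{splitting}. Concretely one uses $z\varphi(z)=A_{-1}+zA_0+z^2A_1$ and the algebraic identity $A_{-1}+zA_0+z^2A_1=(zA_1+(A_0+A_1G))(zI-G)$, valid whenever $G$ is a right solvent, to split off the factor $(zI-G)$ carrying the inside eigenvalues while the complementary linear factor carries the outside ones. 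The delicate points are constructing this factorization from the bare spectral splitting, treating the case $\det A_1=0$ where eigenvalues lie at infinity and the naive dimension count for the decaying solution space degenerates, and verifying that the factors have invertible constant terms so that $H_0$ is invertible. These structural facts are precisely the content of Theorem 3.20 in \cite{SMC}, on which I would lean for the existence statement, while the coefficient formulas for $H_j$ follow from the self-contained computation above.
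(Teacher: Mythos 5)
The paper does not prove this statement at all: it is imported verbatim as ``Part of Theorem 3.20 in \cite{SMC}'', so there is no internal proof to compare against. Your proposal is therefore doing strictly more than the paper does, and the computational core of it is sound: equating coefficients in $\varphi(z)\psi(z)=I$ and $\psi(z)\varphi(z)=I$ does give the two three-term recurrences, the solvent ansatz does collapse them, and the generating-function/similarity bookkeeping ($\wh G=H_0RH_0^{-1}$, $H_k=H_0R^k=\wh G^kH_0$, and the mirror identities for $j\le 0$) is exactly the standard mechanism behind the formulas. One simplification you could make: since the hypotheses already hand you $R$ and $\wh R$ with $\rho<1$, the ansatz $H_k=H_0R^k$ applied to the \emph{mirror} recurrence $H_{k-1}A_1+H_kA_0+H_{k+1}A_{-1}=0$ gives the decaying tail directly, and $G,\wh G$ can then be \emph{defined} by similarity through $H_0$ and checked to solve \eqref{eq:mateqG}; you do not need to produce $\wh G$ first.

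That said, as a standalone proof the proposal has a genuine circularity that you yourself flag: the existence of the solvents with the right spectra, the invertibility of $H_0$, and the identification of the decaying-solution space as exactly $\{\wh G^kX\}$ (which requires that the reciprocals of the $m$ outside eigenvalues, including those at infinity when $\det A_1=0$, account for the full $m$-dimensional decaying subspace) are deferred to ``Theorem 3.20 in \cite{SMC}'' --- i.e.\ to the statement being proved. The invertibility of $H_0$ is the load-bearing fact: every similarity $G=H_0\wh RH_0^{-1}$, $\wh G=H_0RH_0^{-1}$ and hence every spectral claim passes through it, and it is not obtained from the contour-integral formula alone but from the canonical (Wiener--Hopf) factorization of $z\varphi(z)$ across $\mathbb T$ under the splitting \eqref{splitting}. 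If you want a self-contained argument you must construct that factorization from \eqref{splitting} and the existence of $R,\wh R$, rather than cite it; otherwise the honest conclusion is that your write-up is a correct derivation of the coefficient formulas \emph{conditional on} the structural part of the cited theorem, which is precisely the posture the paper itself takes.
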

     
This result, applied with $\varphi(z)=D(z)$ and combined with what
said previously, tells us that the Laurent coefficients of $\wt U(z)$
are of the form
     \[
      D^{-1}(z) = \sum_{j \in \mathbb Z} z^j H_{D,j}, 
                 \qquad 
                 H_{D,j} =
                 \begin{cases}
                   G_D^{-j} H_{D,0}   & j\leq0,\\
                   \wh G_D^j H_{D,0}  & j\geq0,
                 \end{cases} 
     \]
where $G_D$ and $\wh G_D$ are the solutions of the matrix equations
associated with $D(z)$ of the kind \eqref{eq:mateqG} and
     \[
            S_D(z)^{-1} = \sum_{j \in \mathbb Z} z^j H_{S,j}, 
            \qquad 
            H_{S,j} =
            \begin{cases}
              [ I \ 0 ] H_{ 0} \wh R^{-j} [ I \ 0 ]^t  & j\leq0,\\
              [ I \ 0 ] H_{ 0}  R^j [ I \ 0 ]^t & j\geq0,
            \end{cases}
          \]
where the latter equation is obtained by applying
Theorem~\ref{thm:fourier-coefficients-kylov} to the original matrix
Laurent polynomial $\varphi(z)$.

Consider the simpler case where $k=1$ and the decomposition of each
off-diagonal block $C(z)$ of $\varphi(z)$ can be written as $C(z) = u
v^t$ (a constant dyad). This is not restrictive since, in the other
cases, we can write $C(z)$ as a linear combination of at most $3k$
terms of the above form with coefficients $z^j$, $j=-1,0,1$.
       
In view of Theorem~\ref{thm:fourier-coefficients-kylov}, for $z \in
\mathbb T$ we can write each off-diagonal block $\wt C(z)$ of
$\psi(z)$ as
       \[
         \wt C(z) = \wt u(z) \wt v(z)^t, \qquad  
         \wt u(z) = \sum_{j \geq 0} \wh G_D^j H_{D,0} u z^j
         + \sum_{j < 0} G_D^{-j} H_{D,0} u z^j,
       \]
where $\wt v(z) = S_D(z)^{-t}v$, the matrix function $S_{D}(z)^{-1}$
is the inverse of the Schur complement of $D(z)$ and $\norm{v}_2 \leq
1$.  Observe that the Laurent coefficients of $\wt u(z)$ corresponding
to positive powers of $z$ lie in the Krylov subspace $\mathcal K_j(\wh
G_D, H_{D,0} u)$, while the coefficients corresponding to the negative
powers are in $\mathcal K_j(G_D, H_{D,0} u)$.  Here we denote by
$\mathcal K_j(A,v)$ the $(j+1)$-dimensional Krylov subspace
          \[
          \mathcal K_j(A,v)=\hbox{span}(v,Av,A^2v,\ldots,A^{j}v).
          \]
          
Analogously we know that
        \[
        v^tS_D(z)^{-1}=\left(\sum_{j\geq 0}\wh v^tH_{\psi,0}  R^j z^j+\sum_{j< 0}\wh v^t H_{\psi,0} \wh R^{-j} z^j\right)
        \begin{bmatrix}I\\ 0 \end{bmatrix},
          \qquad \wh v:= \begin{bmatrix} v \\ 0 \end{bmatrix},
        \]
therefore
        \begin{equation}\label{sum}
\begin{split}
   -\wt C(z)=\left(
        \sum_{j \geq 0}\right.&\left. 
        \wh G_D^j H_{D,0} u z^j
            + \sum_{j < 0} G_D^{-j} H_{D,0} u z^j
\right)
\\
 & \cdot\left(\sum_{j> 0}\wh v^tH_{\psi,0}  R^j z^j+
 \sum_{j\leq  0}\wh v^t H_{\psi,0}  \wh R^{-j} z^j\right)
            \begin{bmatrix}
              I \\ 0 
            \end{bmatrix}.
\end{split}
        \end{equation}
Denoting by $\wt C^{(h)}(z^{2^h})$ the corresponding off-diagonal
sub-block in $\psi^{(h)}$, from Proposition \ref{prop:1} we have
     \begin{equation}\label{ctilde}
     \wt C^{(h)}(z^{2^h})=\frac{1}{2^h}
       \sum_{j=1}^{2^h}\wt C(z\zeta_{2^h}^j).
     \end{equation}

 In the following, the matrices with columns 
 of the form $A^j b$, for some matrix $A$ and a
 vector $b$, which we  call
 \emph{Krylov matrices}, will play an important role. 
 We indicate a Krylov matrix with the notation
 \[
   \kryl{A,b}{n} := \left[\ b\ \vline \ Ab\ \vline \ \dots \   
    \vline A^{n-1}b\  \right].  
 \]
Moreover, we  denote by $J$ the counter-identity matrix of appropriate
size such that $[1,2,\ldots,n]J=[n,n-1,\ldots,1]$.
    
Relying on \eqref{sum} we can prove the following result.
     
     \begin{lem}\label{lem:addends}
If $C(z)=uv^t$, then $-\wt C^{(h)}(z^{2^h})$ is the sum of the
following four outer products:
     \begin{equation} \begin{split}\label{addends}
     -\wt C^{(h)}(z^{2^h}) &= \Big[ 
       \kryl{\wh G_D, \wh a}{2^h} \cdot \kryl{ \wh R^t, \wh b}{2^h}^t \\
       &+ 
        z^{2^h-1}\cdot \kryl{ \wh G_D, \wh a}{2^h} \cdot J \cdot \kryl{ R^t, b}{2^h}^t \\
       &+ z^{1-2^h}\cdot\kryl{ G_D, a}{2^h} \cdot J \cdot \kryl{ \wh R^t, \wh b}{2^h}^t \\
       &+ 
       \kryl{G_D, a}{2^h} \cdot \kryl{R^t, b}{2^h}^t
       \Big] \begin{bmatrix} I \\ 0 \end{bmatrix},
\end{split}      \end{equation}
          where 
          \begin{align*}
          a=& \left(\sum_{s\in 2^h\mathbb Z\cap \mathbb N} z^{-s-1} G_D^{s+1}\right) H_{D,0} u,&
           b=&\left(\sum_{s\in 2^h\mathbb Z\cap \mathbb N} z^{s+1}R^{s+1} \right)^tH_{\psi,0}^t\wh v,\\
           \wh a=&\left(\sum_{s\in 2^h\mathbb Z\cap \mathbb N} z^s \wh G_D^s\right) H_{D,0} u,& \wh b=&\left(\sum_{s\in 2^h\mathbb Z\cap \mathbb N} z^{-s}\wh R^s \right)^tH_{\psi,0}^t\wh v.
          \end{align*}
     \end{lem}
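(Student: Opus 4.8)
The proof is largely a matter of reindexing: I would start from the factored form \eqref{sum} of $-\wt C(z)$ and substitute it into the averaging identity \eqref{ctilde}, then reorganize the result by the sign pattern of the exponents. Write $-\wt C(z) = L(z)\,\rho(z)\begin{bmatrix} I \\ 0 \end{bmatrix}$, where $L(z)=\sum_{j\geq 0}\wh G_D^j H_{D,0}u\,z^j + \sum_{j<0}G_D^{-j}H_{D,0}u\,z^j$ is the column factor and $\rho(z)=\sum_{j>0}\wh v^t H_{\psi,0}R^j z^j + \sum_{j\leq 0}\wh v^t H_{\psi,0}\wh R^{-j}z^j$ is the row factor. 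Expanding the outer product as a double series $\sum_{k,l}c_k d_l\,z^{k+l}$ in the vector coefficients $c_k,d_l$ of $L,\rho$, the averaging $\frac{1}{2^h}\sum_{j=1}^{2^h}(\,\cdot\,)(z\zeta_{2^h}^j)$ acts term by term through the orthogonality relation $\frac{1}{2^h}\sum_{j=1}^{2^h}\zeta_{2^h}^{j(k+l)}$, which equals $1$ when $2^h\mid k+l$ and $0$ otherwise. Hence only the pairs with $k+l\equiv 0\pmod{2^h}$ survive, and for each such pair $z^{k+l}$ is a power of $z^{2^h}$, which is exactly what lets us read the output as a function of $z^{2^h}$.

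Next I would split the surviving double sum into the four quadrants determined by the signs of $k$ and $l$, which match the four combinations of $\wh G_D$ or $G_D$ with $\wh R$ or $R$, and hence the four outer products in \eqref{addends}: $k\geq 0,\ l\leq 0$ gives the $(\wh G_D,\wh R)$ term, $k\geq 0,\ l>0$ the $(\wh G_D,R)$ term, $k<0,\ l\leq 0$ the $(G_D,\wh R)$ term, and $k<0,\ l>0$ the $(G_D,R)$ term. Within each quadrant the idea is the division algorithm applied to the exponents: writing the nonnegative index as $r+s$ with $0\leq r<2^h$ and $s\in 2^h\mathbb Z\cap\mathbb N$, the remainder $r$ becomes the Krylov index running over the $2^h$ columns while the multiples $s$ are absorbed into the generating vectors $a,b,\wh a,\wh b$. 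For example, in the $k\geq 0,\ l\leq 0$ quadrant the constraint $k\equiv -l\pmod{2^h}$ forces $k$ and $-l$ to share the same remainder $r$, so the sum collapses to $\sum_{r=0}^{2^h-1}\wh G_D^r\wh a\,\wh b^t\wh R^r = \kryl{\wh G_D,\wh a}{2^h}\cdot\kryl{\wh R^t,\wh b}{2^h}^t$, precisely the first term, with the definitions of $\wh a$ and $\wh b$ being exactly the series that collect the multiples-of-$2^h$ contributions.

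The main obstacle, and the only delicate bookkeeping, is the appearance of the counter-identity $J$ and the prefactors $z^{2^h-1}$ and $z^{1-2^h}$ in the two mixed quadrants. These come from the fact that the $R$-series in $\rho(z)$ starts at the exponent $l=1$ (not $l=0$) and, symmetrically, the $G_D$-series in $L(z)$ starts at $k=-1$; this offset by one is what produces the $G_D^{s+1}$, $R^{s+1}$ and $z^{\mp(s+1)}$ in the definitions of $a$ and $b$, and, more importantly, changes how remainders pair under the congruence $k+l\equiv 0\pmod{2^h}$. Concretely, in the $k\geq 0,\ l>0$ quadrant one finds that the left and right Krylov indices must sum to $2^h-1$ rather than be equal; this reversed pairing is exactly realized by inserting $J$, since $\kryl{\wh G_D,\wh a}{2^h}J$ reverses the column order and pairs $\wh G_D^{2^h-1-r}$ with $(R^t)^r$, while the shift of the total degree by $2^h$ produces the factor $z^{2^h-1}$. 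I would confirm this by the same reindexing, checking that $k=2^h-1-r+s$ and $l=r+1+s'$ (with $r\in\{0,\dots,2^h-1\}$ and $s,s'\in 2^h\mathbb Z\cap\mathbb N$) parametrize precisely the pairs $k\geq 0,\ l>0$ satisfying $k+l\equiv 0\pmod{2^h}$, and that the accumulated power is $z^{k+l}=z^{2^h-1}\cdot z^{s+s'+1}$. The third quadrant is symmetric and yields the $z^{1-2^h}$ factor, while the fourth is as direct as the first. Collecting the four contributions and restoring the trailing factor $\begin{bmatrix}I\\0\end{bmatrix}$ then gives \eqref{addends}.
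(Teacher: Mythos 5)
Your proposal is correct and takes essentially the same route as the paper's proof: both substitute the four-term factorization \eqref{sum} into the averaging identity \eqref{ctilde}, use the roots-of-unity filter to keep only the terms whose total exponent is divisible by $2^h$, and reindex each of the four resulting products by residue classes modulo $2^h$ to expose the Krylov-matrix structure, with the counter-identity $J$ and the factors $z^{\pm(2^h-1)}$ correctly traced to the index shift in the $R$- and $G_D$-series. The only cosmetic difference is that the paper filters each of the four outer products separately, whereas you filter the full double series first and then split into sign quadrants.
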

     \begin{proof}
Thanks to \eqref{sum} we may write $-\wt C(z)$ as the sum of four
outer products. By the linearity of \eqref{ctilde} we can consider
them separately.  Take for example
        \[
        \left(\sum_{j \geq 0} \wh G_D^j H_{D,0} u z^j\right)\cdot\left(\sum_{j\leq 0}\wh v^tH_{\psi,0}  \wh R^{-j} z^j\right)=\sum_{j \geq 0} \wh G_D^j H_{D,0} u \wh v^t H_{\psi,0}\sum_{s\geq 0}\wh R^s z^{j-s},
        \]
where we have ignored $[ I \ 0 ]^t$ because it can be factored on the
right.  The block $\wt C^{(h)}(z)$ of $\psi^{(h)}(z)$ corresponding to
$\wt C(z)$ in $\psi(z)$ verifies the relation $\wt C^{(h)}(z^{2^h}) =
\frac{1}{2^h} \sum_{l = 1}^{2^h} \wt C(z \zeta_{2^h}^l)$ so that
       \begin{align*}
         \frac{1}{2^h}&\sum_{l=1}^{2^h}\sum_{j \geq 0} \wh G_D^j H_{D,0} u \wh v^t H_{\psi,0}\sum_{s\geq 0}\wh R^s (z\zeta_{2^h}^l)^{j-s}=\sum_{j \geq 0} \wh G_D^j H_{D,0} u \wh v^t H_{\psi,0}\sum_{s\in (2^h\mathbb Z+j)\cap \mathbb N}\wh R^s z^{j-s}\\
         &=\sum_{j = 0}^{2^h-1}\left(\sum_{s\in (2^h\mathbb Z+j)\cap \mathbb N} z^s \wh G_D^s\right) H_{D,0} u \wh v^t H_{\psi,0}\left(\sum_{s\in (2^h\mathbb Z+j)\cap \mathbb N}\wh R^s z^{-s}\right),
       \end{align*}
where $2^h \mathbb Z+j := \{ s \in \mathbb Z \ | \ s \equiv j \mod 2^h
\}$.  Observe that the $(j+1)$-st term of the previous sum is equal to
the $j$-th term multiplied on the left by $z \wh G_d$ and on the right
by $z^{-1}\wh R$.  In particular we can rewrite it as
       \[\left[
                 a\ \vline\ z \wh G_D\cdot a\ \vline\ \dots\ \vline\ (z \wh G_D)^{2^h-1}\cdot a
                 \right]\cdot
                 \left[
                     \wh b\ \vline\ (z^{-1} \wh R^t)\cdot \wh b\ \vline\ \dots\ \vline\ (z^{-1} \wh R^t)^{2^h-1}\cdot\wh b
                     \right]^t,
                     \]
                     that is,
       $
       \mathcal{KM}_{2^h}(z\wh G_D,a)\cdot\mathcal{KM}_{2^h}(z^{-1}\wh R^t,\wh b)$.

The variables $z$ in the above factors cancel out, and we obtain one
of the addends in the statement of the theorem.
       
Then consider $\left(\sum_{j \geq 0} \wh G_D^j H_{D,0} u
z^j\right)\cdot\left(\sum_{j> 0}\wh v^tH_{\psi,0} R^{-j} z^j\right)$
for which we arrive at the expression
      \[
      \sum_{j = 0}^{2^h-1}\left(\sum_{s\in (2^h\mathbb Z+j)\cap \mathbb N} z^s \wh G_D^s\right) H_{D,0} u \wh v^t H_{\psi,0}\left(\sum_{s\in (2^h\mathbb Z-j)\cap \mathbb N} R^s z^{s}\right).
      \]
This time we have a product of the form
          \[
         \left[
          a\ \vline \ z\wh G_D\cdot a\ \vline\dots\vline (z \wh G_D)^{2^h-1}\cdot a\
          \right]\cdot
          \left[
              (z R^t)^{2^h-1}\cdot b \ \vline\dots\vline (z R^t)\cdot b\ \vline \ b\
              \right]^t,
          \]
that is
       $
       z^{2^h-1}\cdot \mathcal{KM}_{2^h}(\wh G_D,a)\cdot J\cdot \mathcal{KM}_{2^h}(\wh R^t,\wh b)$.
The other two relations are obtained in a similar manner.
     \end{proof}

In the case $C(z)=z^suv^t$ with $s=-1,1$ one can recover the same
behavior just taking into account a shift in the powers of $z$ in
\eqref{sum} that modifies the powers of $z$ in the outer products
accordingly.

%     \begin{rem}
%     The previous result says that $\wt C^{(h)}(z)$ can be written as the sum of $12k$ (in the general case) outer products of matrices which span Krylov spaces of dimension at most $2^h$. The aim of the next section is to show how this structure implies the decay of the singular values. In particular, in the spirit of Section~\ref{subsec:tightbound}, we will construct orthogonal bases for such Krylov subspaces.
%     \end{rem}
     
  \section{Singular values and displacement rank}\label{sec:disp}
Lemma \ref{lem:addends} provides a tool for analyzing the singular
values decay of the off-diagonal block $\wt C^{(h)}(z)$ of
$\psi^{(h)}(z)$.  In fact, these blocks can be written as the sum of
few terms each of them is the product of two Krylov matrices, one of
which is transposed.

The next step is to investigate the singular values of a product of
this kind. In this analysis, we rely on the concept of displacement
rank and on some result by B.~Beckermann \cite{beck2004}, of which we
report the proof.

     \begin{defn}
Given matrices $A,B,X\in\mathbb C^{m\times m}$ the \emph{displacement
  rank} of $X$ with respect to the pair $(A,B)$ is defined as
     \[
     \rho_{A,B}(X)=\hbox{rank}(AX-XB).
     \]
     \end{defn}
We need also to introduce the set $\mathcal R_{n,d}$ of rational
functions over $\mathbb C$ where $n$ and $d$ are the degree of the
numerator and of the denominator, respectively.

For a matrix $X$ with a small displacement rank it is possible to provide bounds on its
singular values in terms of the optimal values of some Zolotarev problems \cite{zol} according to the following result of
B.~Beckermann \cite{beck2004}. 
     
     \begin{thm}\label{thm:beckermann}
Let $X\in\mathbb C^{m\times m}$ and suppose that there exist two
normal matrices $A,B\in\mathbb C^{m\times m}$ such that $
\rho_{A,B}(X)=d.$ Then, indicating with $E$ and $F$ the spectrum of
$A$ and $B$ respectively, for the singular values $\sigma_i(X)$ of $X$
it holds:
     \[
     \frac{\sigma_{1+l\cdot d}(X)}{\norm{X}_2}\leq Z_l(E,F):=\inf_{r(x)\in \mathcal R_{l,l}}\frac{\max_{x\in E} |r(x)|}{\min_{x\in F} |r(x)|},\quad l=1,2,\dots\ .
     \]
     \end{thm}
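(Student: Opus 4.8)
The plan is to convert the displacement equation $AX-XB=M$, where $\rk(M)=d$, into a statement about how well $X$ can be approximated by low-rank matrices, and then to optimize over a class of rational functions using the normality of $A$ and $B$. First I would factor the low-rank right-hand side as $M=\sum_{i=1}^d p_i q_i^t$ and recall the key fact that for any rational function $r\in\mathcal R_{l,l}$ with $r=P/Q$, the matrix $r(A)X-Xr(B)$ can be expressed in terms of $M$ together with the resolvents of $A$ and $B$; concretely one expects an identity of the shape
\[
r(A)\,X - X\,r(B) = \sum_{j} (\text{matrix polynomial in }A)\,M\,(\text{matrix polynomial in }B),
\]
so that the rank of $r(A)X-Xr(B)$ is at most $l\cdot d$. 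The precise mechanism here is that $Q(A)^{-1}$ and $Q(B)^{-1}$ spread the rank-$d$ perturbation across at most $l$ copies, where $l=\deg Q$.

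Next I would use this rank bound to control the singular values. Since $\rk\bigl(r(A)X-Xr(B)\bigr)\le l\cdot d$, the matrix $Xr(B)=r(A)X-\bigl(r(A)X-Xr(B)\bigr)$ differs from $r(A)X$ by a matrix of rank at most $l\cdot d$; equivalently $r(A)X$ can be written as $Xr(B)$ plus a rank-$(l\cdot d)$ correction. By the Eckart–Young characterization, the $(1+l\cdot d)$-th singular value of $r(A)X$ is at most $\norm{Xr(B)}_2$. Here is where normality enters decisively: because $A$ and $B$ are normal with spectra $E$ and $F$, we have $\norm{r(A)^{-1}}_2=1/\min_{x\in E}|r(x)|$ and $\norm{r(B)}_2=\max_{x\in F}|r(x)|$, since for a normal matrix the $2$-norm of a function of the matrix equals the sup of the modulus of that function over the spectrum. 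Combining these,
\[
\sigma_{1+l\cdot d}(X)
 = \sigma_{1+l\cdot d}\bigl(r(A)^{-1} r(A) X\bigr)
 \le \norm{r(A)^{-1}}_2\,\sigma_{1+l\cdot d}\bigl(r(A)X\bigr)
 \le \norm{r(A)^{-1}}_2\,\norm{r(B)}_2\,\norm{X}_2,
\]
which yields $\sigma_{1+l\cdot d}(X)/\norm{X}_2 \le \max_{x\in F}|r(x)|\big/\min_{x\in E}|r(x)|$. Taking the infimum over all $r\in\mathcal R_{l,l}$ gives the claimed bound, after matching the roles of $E$ and $F$ in the Zolotarev quotient.

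I expect the main obstacle to be establishing the rank estimate $\rk\bigl(r(A)X-Xr(B)\bigr)\le l\cdot d$ cleanly, together with the invertibility bookkeeping needed to pass from $r(A)X-Xr(B)=M'$ to a genuine low-rank approximation of $X$ itself. The subtlety is that $r(A)$ and $r(B)$ may be singular (when $r$ has poles in $E$ or $F$), so one must argue that the optimal $r$ in the Zolotarev problem can be taken with $\min_{x\in E}|r(x)|>0$, i.e. poles avoiding the relevant spectrum, so that $r(A)^{-1}$ is well defined; a limiting or density argument handles the general case. The telescoping identity that produces the factor $l$ from $\deg Q=l$ is the technical heart, and care is needed to ensure the bound is exactly $l\cdot d$ rather than something larger. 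Once that algebraic identity is in hand, the singular-value and normality steps are routine.
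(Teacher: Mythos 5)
Your proposal is correct and follows essentially the same route as the paper: bound the rank of the displaced quantity obtained by applying a degree-$(l,l)$ rational function by $l\cdot d$, then combine the Eckart--Young characterization of $\sigma_{1+l\cdot d}$ with the fact that, for normal $A$ and $B$, $\norm{r(A)^{-1}}_2$ and $\norm{r(B)}_2$ equal $1/\min_{E}\abs{r}$ and $\max_{F}\abs{r}$. The only cosmetic differences are that the paper organizes the rank bound around the purely polynomial quantity $q(A)Xp(B)-p(A)Xq(B)$ (which sidesteps the resolvents that your ``matrix polynomial'' identity actually requires once $r$ is genuinely rational), and that your final quotient has $E$ and $F$ interchanged, which is harmless because replacing $r$ by $1/r$ shows $Z_l(E,F)=Z_l(F,E)$.
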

     \begin{proof}
Consider $p(x):=\sum_{i=0}^lp_ix^i$ and $q(x):=\sum_{i=0}^lq_ix^i$
polynomials of degree $l$ and define $r(x):=\frac{p(x)}{q(x)}$. We
prove that the matrix
     \[
     \Delta:=q(A)Xp(B) - p(A)Xq(B)
     \]
has rank at most $l\cdot d$. Without loss of generality we consider
the case $d=1$ and suppose $AX-XB=u v^t$. We can prove by induction
that $A^kX-XB^k=\sum_{h=0}^{k-1}A^huv^tB^{k-1-h}$. For $k=1$ the
property trivially holds. For $k>1$ one has:
     \begin{align*}
     A^kX=A^{k-1}XB+A^{k-1}uv^t&=XB^k+\left(\sum_{h=0}^{k-2}A^h
     uv^tB^{k-1-h}\right)B+A^{k-1}uv^t\\&=XB^k+\sum_{h=0}^{k-1}A^huv^tB^{k-1-h}.
     \end{align*}
     Now, observe that 
     \[
     \Delta=q(A)Xp(B) - p(A)Xq(B)=\sum_{i\neq j}^d(q_ip_j-q_jp_i)(A^iXB^j-A^jXB^i),
     \]
     and if $i>j$ (the other case is analogous)
     \[
     A^iXB^j-A^jXB^i=A^j(A^{i-j}X-XB^{i-j})B^j=A^j\left(\sum_{h=0}^{i-j-1}A^huv^tB^{k-1-h}\right)B^j.
     \]
In particular all the addends involved in the expansion of $\Delta$
can be expressed as sum of dyads whose left vectors belong to the
Krylov space $\mathcal K_l(A,u)$ and so it has rank at most $l$.
     
     Assume that $q(A)$ and $p(B)$ are invertible, define $Y:=q(A)^{-1}\Delta p(B)^{-1}$ observe that 
     $
     X-Y=r(A)Xr(B)^{-1}$ so that
     \[
     \norm{X-Y}_2=\norm{r(A)Xr(B)^{-1}}_2\leq \norm{X}_2 \max_{E}|r(x)|\max_F|r(x)|^{-1}=\norm{X}_2 \frac{\max_{E}|r(x)|}{\min_F|r(x)|}.
     \]
Since $\sigma_{k+1}(X)$ coincides with the minimum of $\|X-W\|_2$
taken over all the matrices $W$ of rank $k$, and since
$\hbox{rank}(Y)=\hbox{rank}(\Delta)\le l\cdot d$, we find that
\[
\sigma_{l\cdot d+1}(X)\le \|X-Y\|_2\le \norm{X}_2 \frac{\max_{E}|r(x)|}{\min_F|r(x)|}.
\]    
Taking the infimum over the set of rational functions of degree
$(d,d)$ completes the proof.
     \end{proof}
     
Note that the normality hypothesis can be relaxed by replacing it with
the diagonalizability of $A$ and $B$. The price to pay is a larger
constant depending on the conditioning of the eigenvector matrices as stated by the following

     \begin{cor}\label{cor:beckermann}
Let $X\in\mathbb C^{m\times m}$ and suppose that there exist two
diagonalizable matrices $A,B\in\mathbb C^{m\times m}$ such that $
\Delta_{A,B}(X)=d$, that is $A=V_AD_AV_A^{-1}$, $B=V_BD_BV_B^{-1}$
with $D_A$ and $D_B$ diagonal matrices.  Then, indicating with $E$ and
$F$ the spectrum of $A$ and $B$ respectively, it holds:
          \[
          \sigma_{1+l\cdot d}(X)\leq Z_l(E,F)\cdot \norm{X}_2\cdot
          \kappa(V_A)\cdot \kappa(V_B)\] 
where $\kappa(W)=\|W\|_2\|W^{-1}\|_2$ denotes the spectral condition
number of $W$.
     \end{cor}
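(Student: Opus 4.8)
The plan is to reuse the proof of Theorem~\ref{thm:beckermann} almost verbatim, observing that normality was invoked in exactly one place: the final norm estimate. First note that the construction $\Delta := q(A)Xp(B) - p(A)Xq(B)$ and the bound $\rk(\Delta) \leq l\cdot d$ are purely algebraic. The induction establishing $A^kX - XB^k = \sum_{h=0}^{k-1} A^h uv^t B^{k-1-h}$ in the case $\rho_{A,B}(X) = 1$, and the subsequent expansion of $\Delta$ into dyads whose left factors lie in $\mathcal K_l(A,u)$, use only the displacement relation and never any spectral property of $A$ or $B$. Hence this part of the argument transfers unchanged, and $\rk(\Delta) \leq l\cdot d$ still holds for diagonalizable $A,B$.

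The only modification concerns the estimates for $\norm{r(A)}_2$ and $\norm{r(B)^{-1}}_2$. Writing $A = V_A D_A V_A^{-1}$ gives $r(A) = V_A\, r(D_A)\, V_A^{-1}$, where $r(D_A)$ is diagonal with entries $r(\lambda)$, $\lambda \in E$; submultiplicativity of the spectral norm then yields $\norm{r(A)}_2 \leq \kappa(V_A)\norm{r(D_A)}_2 = \kappa(V_A)\max_{x\in E}\abs{r(x)}$. Symmetrically, $\norm{r(B)^{-1}}_2 = \norm{V_B\, r(D_B)^{-1}\, V_B^{-1}}_2 \leq \kappa(V_B)\max_{x\in F}\abs{r(x)}^{-1} = \kappa(V_B)/\min_{x\in F}\abs{r(x)}$. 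Defining $Y := q(A)^{-1}\Delta\, p(B)^{-1}$ exactly as before, one has $X - Y = r(A)Xr(B)^{-1}$ and $\rk(Y) = \rk(\Delta) \leq l\cdot d$, so
\[
\sigma_{1+l\cdot d}(X) \leq \norm{X-Y}_2 \leq \norm{X}_2\, \kappa(V_A)\kappa(V_B)\,\frac{\max_{x\in E}\abs{r(x)}}{\min_{x\in F}\abs{r(x)}}.
\]
Taking the infimum over $r \in \mathcal R_{l,l}$ produces the factor $Z_l(E,F)$ and closes the argument.

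The only genuinely delicate point --- which is already present in the proof of Theorem~\ref{thm:beckermann} --- is that the construction of $Y$ requires $q(A)$ and $p(B)$ to be invertible, i.e.\ the zeros and poles of $r$ must avoid $E$ and $F$. This holds for every $r$ in a dense subset of $\mathcal R_{l,l}$, and since $Z_l(E,F)$ is defined as an infimum, a standard continuity/perturbation argument lets us restrict to such $r$ with no loss. I expect this to be the main thing to verify carefully; the remainder is a routine propagation of the condition-number factors $\kappa(V_A)$ and $\kappa(V_B)$ through the submultiplicativity of the $2$-norm, and no new ideas beyond those of Theorem~\ref{thm:beckermann} are needed.
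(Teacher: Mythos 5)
Your proposal is correct and matches the paper's intent exactly: the paper gives no separate proof of this corollary, presenting it as the immediate consequence of rerunning the proof of Theorem~\ref{thm:beckermann} with the norm estimate $\norm{r(A)}_2\le\kappa(V_A)\max_{E}\abs{r}$ (and similarly for $B$) in place of the normal-matrix identity, which is precisely what you do. Your additional remark on the invertibility of $q(A)$ and $p(B)$ is a detail the paper also glosses over in the theorem itself, and your density/infimum argument handles it adequately.
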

     
     The case where $E$ and $F$ are disjoint subsets of the real line,  has been extensively studied by Zolotarev \cite{zol} who managed to provide explicit bounds for $Z_l(E,F)$. The result we are going to quote is adapted to our case and can be found in \cite{guttel2015zolotarev}. See also \cite{akh1990,abr1984,med2005} for more classical references.
                \begin{thm}[Zolotarev]\label{thm:zolotarev}
                    Let $\delta\in(0,1)$, $E:=[-\infty,-\delta^{-1}]\cup[\delta^{-1},+\infty]$ and $F=[-\delta,\delta]$. Then
                    \[
                    Z_{2l}(E,F)\leq \frac{2\rho^l}{1-2\rho^l},
                    \]
                    where 
                    \begin{align*}
                    \rho:=\operatorname{exp}\left(-\frac{\pi K(\sqrt{1-\delta^4})}{2K(\delta^2)}\right),\qquad K(x):=\int_0^1\frac{1}{\sqrt{(1-t^2)(1-x^2t^2)}}dt.
                    \end{align*}
                    Moreover, if $\delta \approx 1$ then $K(\delta^2)\approx  \log\left(\frac{4}{\sqrt{1-\delta^4}}\right)$ and $K(\sqrt{1-\delta^4})\approx \frac{\pi}{2}$, yielding
                    \[
                    Z_{2l}(E,F)\leq \frac{2\rho^l}{1-2\rho^l}\approx \frac{2\wt \rho^l}{1-2\wt \rho^l},\qquad \wt{\rho}:=\operatorname{exp}\left(-\frac{\pi^2}{2\log\left(\frac{16}{1-\delta^4}\right)}\right).
                    \]
                    \end{thm}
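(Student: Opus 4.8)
The statement is a classical result from the theory of Zolotarev's rational approximation problems (the third/fourth Zolotarev problems), so the plan is to reduce our symmetric configuration to the canonical two-interval problem and then invoke, and reconstruct constructively, Zolotarev's explicit solution. Since only an \emph{upper} bound on $Z_{2l}(E,F)$ is required, it suffices to exhibit a single competitor rational function and estimate its modulus ratio.

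First I would exploit the symmetry $x\mapsto -x$ shared by $E$ and $F$. Restricting the infimum defining $Z_{2l}$ to \emph{even} rational functions $r(x)=R(x^2)$ with $R\in\mathcal R_{l,l}$ can only increase it, so $Z_{2l}(E,F)\le \inf_R \frac{\max_{w\in E'}|R(w)|}{\min_{w\in F'}|R(w)|}$, where under the substitution $w=x^2$ the set $E$ maps to $E'=[\delta^{-2},+\infty]$ and $F$ maps to $F'=[0,\delta^2]$. This halves the degree (explaining the index $2l$) and turns the problem into the standard Zolotarev problem on two disjoint real intervals, at the cost of replacing $\delta$ by $\delta^2$ in the geometry, which is precisely the modulus appearing in $\rho$.

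Next I would normalize by a M\"obius transformation carrying the four endpoints $0,\delta^2,\delta^{-2},\infty$ to a symmetric canonical configuration $\pm 1,\pm 1/k$. Since $\mathcal R_{l,l}$ is invariant under pre- and post-composition with M\"obius maps and $Z_l$ depends only on the two target sets, this leaves the extremal value unchanged; computing the cross-ratio of the endpoints fixes the elliptic modulus $k=\delta^2$. The ring domain $\overline{\mathbb C}\setminus(E'\cup F')$ then maps conformally, via an elliptic (Jacobi $\mathrm{sn}$) map onto a rectangle/annulus, with conformal modulus expressed through the complete elliptic integrals $K(k)$ and $K(k')=K(\sqrt{1-\delta^4})$; this is the source of $\rho=\exp\!\big(-\pi K(\sqrt{1-\delta^4})/(2K(\delta^2))\big)$.

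On the annulus the natural competitor behaves like a monomial $\zeta^l$, whose modulus ratio between the two boundary circles is exactly $\rho^l$; pulling this back through the elliptic map yields Zolotarev's explicit rational function, built from $\mathrm{sn}$ with equally spaced nodes, and bounding its modulus on $E'$ and $F'$ by the associated theta-function product and summing the resulting geometric tail produces the stated bound $\frac{2\rho^l}{1-2\rho^l}$. The main obstacle is exactly this last step: obtaining the sharp leading constant, and the clean $\frac{2\rho^l}{1-2\rho^l}$ shape rather than merely the rate $\rho^l$, requires the full elliptic/theta-function machinery, whereas the bare geometric decay rate $\rho$ would already follow from the potential theory of the condenser $(E',F')$. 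Finally, the $\delta\approx 1$ asymptotics follow by substituting the standard limits $K(\delta^2)\approx \log\!\big(4/\sqrt{1-\delta^4}\big)$ as $\delta^2\to 1$ and $K(\sqrt{1-\delta^4})\to K(0)=\pi/2$ as $\sqrt{1-\delta^4}\to 0$ into the exponent; using $\log\!\big(4/\sqrt{1-\delta^4}\big)=\tfrac12\log\!\big(16/(1-\delta^4)\big)$ then gives $\wt\rho=\exp\!\big(-\pi^2/(2\log(16/(1-\delta^4)))\big)$.
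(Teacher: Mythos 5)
First, a point of reference: the paper does not prove this theorem at all. It is quoted as a known classical result, ``adapted to our case,'' with the proof delegated to \cite{guttel2015zolotarev} and the classical references \cite{akh1990,abr1984,med2005}. So there is no in-paper argument to compare yours against; your proposal has to stand on its own as a reconstruction of the classical proof.

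As an outline, your route is the standard and correct one, and several of its reductions are genuinely right and worth crediting: restricting the infimum to even rational functions $r(x)=R(x^2)$ gives a valid upper bound and cleanly explains the index $2l$ versus degree $l$; the resulting two-interval condenser $([0,\delta^2],[\delta^{-2},\infty])$ is M\"obius-equivalent to the canonical Zolotarev configuration with elliptic modulus $\delta^2$ and complementary modulus $\sqrt{1-\delta^4}$, which is exactly where $K(\delta^2)$ and $K(\sqrt{1-\delta^4})$ enter; and the $\delta\approx 1$ asymptotics are verified correctly via $K(k)\sim\log(4/k')$ and $K(0)=\pi/2$. The genuine gap is the one you name yourself and then do not close: the statement to be proved is the explicit inequality $Z_{2l}(E,F)\leq 2\rho^l/(1-2\rho^l)$, not merely the decay rate $\rho^l$, and your argument never exhibits the competitor rational function nor carries out the theta-function (or equivalent) estimate that produces the constant $2$ in the numerator and the $1-2\rho^l$ in the denominator. ``Pulling back a monomial through the elliptic map'' and ``summing the geometric tail'' is a description of what Zolotarev's construction does, not a derivation of the bound; the passage from the annulus modulus to the precise two-sided estimates $\max_{E}|r|\leq 2\rho^l M$ and $\min_{F}|r|\geq(1-2\rho^l)M$ is precisely the content of the theorem and is left unexecuted. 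For the purposes of this paper that is arguably acceptable --- the authors themselves import the result rather than prove it --- but as a proof of the stated inequality your text is an (accurate) plan, not a proof.
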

     
Now, we prove that some matrices involved in the decomposition of the 
off-diagonal submatrices of $\psi^{(h)}(z)$ enjoy a small displacement
rank.

       \begin{prop}\label{prop:displacement}
 Under the assumptions and the notation of Lemma~\ref{lem:addends} we
 have 
 \[
 \wt C^{(h)}(z^{2^h})=\left[I \ I \right]\cdot X^{(h)}(z)
 Y^{(h)}(z)^t\cdot\left[I\ I\right]^t\cdot \left[I\ 0\right]^t
 \]
  where
       \[
       X^{(h)}(z):=\begin{bmatrix}
       \kryl{ \wh G_D, \wh a}{2^h} \\
       z^{1-2^h} \kryl{G_D, a}{2^h} J \\
       \end{bmatrix},\qquad Y^{(h)}(z):=\begin{bmatrix}
              z^{2^h-1}\kryl{ R^t, b}{2^h} J \\
              \kryl{\wh R^t,\wh b}{2^h} 
              \end{bmatrix}.
       \]    
 Moreover, we have  the following
 displacement relations:
      \[
      \rho_{W_D,\Pi}(X^{(h)})=1,\qquad \rho_{W,\Pi}(Y^{(h)})=1,
      \]
      with
      \[
      W_D:=\begin{bmatrix}
      \wh G_D&0\\
      0&  G_D^\dagger
      \end{bmatrix},\qquad W:=\begin{bmatrix}
             (R^\dagger)^t&0\\
            0& \wh R^t
            \end{bmatrix},\qquad \Pi=\begin{bmatrix}
            0&&&1\\
            1&\ddots\\
            &\ddots&\ddots\\
            &&1&0
            \end{bmatrix},
      \]
      where the super-script $\dagger$ indicates the Moore-Penrose pseudoinverse.
       \end{prop}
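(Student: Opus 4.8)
The plan is to prove the two claims separately. The factorization of $\wt C^{(h)}(z^{2^h})$ will follow by a direct bookkeeping from Lemma~\ref{lem:addends}, while the displacement identities will be the genuine content and will be established by a short explicit computation on the columns of the Krylov matrices.

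First I would verify the factored form. Lemma~\ref{lem:addends} already expresses $-\wt C^{(h)}(z^{2^h})$ as a sum of four outer products, each of the shape $P\,Q^t$ with $P\in\{\kryl{\wh G_D,\wh a}{2^h},\ z^{1-2^h}\kryl{G_D,a}{2^h}J\}$ and $Q\in\{z^{2^h-1}\kryl{R^t,b}{2^h}J,\ \kryl{\wh R^t,\wh b}{2^h}\}$, all four combinations appearing exactly once. Stacking the two admissible $P$-blocks vertically into $X^{(h)}$ and the two $Q$-blocks into $Y^{(h)}$, the product $X^{(h)}(Y^{(h)})^t$ reproduces precisely the sum of all four cross terms, so that $\left[I\ I\right]X^{(h)}(Y^{(h)})^t\left[I\ I\right]^t$ equals the required combination; the trailing $\left[I\ 0\right]^t$ carries over the projection already present in~\eqref{addends}. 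This step is purely a matter of matching the $J$-factors and the powers $z^{2^h-1}$, $z^{1-2^h}$ against the definitions of $X^{(h)}$ and $Y^{(h)}$, and requires no new idea.

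The core of the proof is the displacement relation $\rho_{W_D,\Pi}(X^{(h)})=1$. The key observation is that shifting the columns of a Krylov matrix $\kryl{M,w}{2^h}$ by the cyclic shift $\Pi$ almost coincides with multiplying on the left by $M$: one has $M\,\kryl{M,w}{2^h} - \kryl{M,w}{2^h}\,\Pi$ equal to a matrix whose columns vanish except in the last position, where it equals $M^{2^h}w - w$, a single column, hence rank one. Applying this to each of the two blocks of $X^{(h)}$ — with $M=\wh G_D$ for the top block and $M=G_D^\dagger$ (after accounting for the reversal $J$ and the pseudoinverse, which reverses the direction of the recurrence for the $G_D$-Krylov matrix) — produces, after the block-diagonal multiplication by $W_D$, a matrix $W_D X^{(h)} - X^{(h)}\Pi$ whose nonzero columns are confined to a single location, giving rank exactly one. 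The same argument applied to $Y^{(h)}$ with $M=(R^\dagger)^t$ and $M=\wh R^t$ yields $\rho_{W,\Pi}(Y^{(h)})=1$.

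I expect the main obstacle to lie in the bottom block of $X^{(h)}$ and the top block of $Y^{(h)}$, where the reversal matrix $J$ and the pseudoinverse $\dagger$ interact. Concretely, $J$ reverses the column order of the Krylov matrix, so that left-shifting by $\Pi$ corresponds to a \emph{right}-shift of the underlying Krylov sequence, and the recurrence $M v_{k+1}=v_k$ must be read backwards; this is exactly why $G_D^\dagger$ rather than $G_D$ appears in $W_D$. One must check that $G_D$ is nonsingular on the relevant Krylov subspace so that the pseudoinverse acts as a genuine inverse along the recurrence, and that the single leftover column (arising from the first rather than the last Krylov vector, because of $J$) still produces a rank-one defect rather than something larger. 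Once this direction-reversal bookkeeping is handled carefully, the rank-one conclusion follows, and the block-diagonal structure of $W_D$ and $W$ guarantees that the two single-column defects from the two blocks combine into a single rank-one defect for the full stacked matrix.
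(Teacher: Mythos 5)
Your proposal is correct and follows essentially the same route as the paper: the factorization is checked by matching the four outer products of Lemma~\ref{lem:addends} against the block product $X^{(h)}(Y^{(h)})^t$, and the displacement relations are obtained by the direct computation showing that $W_DX^{(h)}-X^{(h)}\Pi$ and $WY^{(h)}-Y^{(h)}\Pi$ have all their possibly nonzero entries confined to the last column (your worry about the $J$-reversed blocks resolves exactly this way: their defect also lands in the last column, so the two blocks contribute to a single rank-one column). The paper's proof states this computation in one line; you have simply spelled it out.
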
  
       \begin{proof}
 The first claim simply follows by expanding the expression for $\wt C^{(h)}(z^{2^h})$ and by
 comparing it with equation~\eqref{addends}.  Concerning the
 displacement equations, a direct computation shows that the matrices
       \[
       W_D X^{(h)}(z)-X^{(h)}(z)\Pi\quad\text{and}\quad W Y^{(h)}(z)-Y^{(h)}(z)\Pi\
       \]
       have only the last column possibly different from zero.
       \end{proof}
       
The above result allows us to give a bound to the singular values of $\wt C^{(h)}(z)$.
       
    \begin{thm}\label{thm:thm}
 Let $\varphi(z)=z^{-1}A_{-1}+A_0+zA_1$ be an $m\times m$ matrix
 Laurent polynomial such that the CR ---given by \eqref{cr}--- can be carried
 out with no breakdown, the splitting property \eqref{splitting} is
 verified, and $\varphi(z)$ has quasiseparable rank $1$ for every
 $z\in\mathbb T$. Assume that the matrices $R$ and $\wh R$ which solve
 the matrix equations \eqref{eq:mateqR} are diagonalizable by means of
 the two eigenvector matrices $V_R$ and $V_{\wh R}$,
 respectively. Assume that $G_D$ and $R$ are invertible. Then $\varphi(z) \in \dq{f}$ where
                  \[
                  f(l):=\gamma\cdot Z_l(E,\mathbb T),
                  \]
 with $\gamma$ a multiple of $\max\{\kappa(V_R),\kappa(V_{\wh R})\}$
 and $E$ contains the eigenvalues of $\varphi(z)$.
         \end{thm}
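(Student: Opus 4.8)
The plan is to combine the explicit low--displacement--rank factorization of $\wt C^{(h)}(z^{2^h})$ from Proposition~\ref{prop:displacement} with the Zolotarev--type singular value bound of Corollary~\ref{cor:beckermann}, and then to discharge the normalization by $\norm{\psi^{(h)}(z)}_2$ required by the definition of $\dq{f}$. As in the discussion preceding Lemma~\ref{lem:addends}, I would first reduce to the case $k=1$, where each off-diagonal block $C(z)$ is a single dyad $uv^t$ (or $z^{\pm1}uv^t$); the quasiseparable rank $1$ case over $z\in\mathbb T$ reduces to this by treating the at most three powers of $z$ separately, and any off-diagonal block is handled identically.

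For the $k=1$ case, Proposition~\ref{prop:displacement} gives $\wt C^{(h)}(z^{2^h})=[I\ I]\,X^{(h)}(z)\,Y^{(h)}(z)^t\,[I\ I]^t[I\ 0]^t$ with $\rho_{W,\Pi}(Y^{(h)})=1$. I would isolate the factor $Y^{(h)}$, whose displacement operator $W$ carries exactly the eigenstructure of $R$ and $\wh R$ named in the statement. Writing $\wt C^{(h)}=P\,Y^{(h)t}M$ with $P=[I\ I]X^{(h)}$ and $M=[I\ I]^t[I\ 0]^t$, submultiplicativity of singular values under multiplication by bounded matrices yields
\[
\sigma_l(\wt C^{(h)}(z^{2^h}))\leq \norm{P}_2\,\norm{M}_2\,\sigma_l(Y^{(h)}(z))\leq 2\,\norm{X^{(h)}(z)}_2\,\sigma_l(Y^{(h)}(z)).
\]
Now apply Corollary~\ref{cor:beckermann} to $Y^{(h)}$ with displacement pair $(W,\Pi)$ and $d=1$. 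Since $R$ and $\wh R$ are diagonalizable and $R,G_D$ are invertible, $W=\mathrm{blockdiag}((R^\dagger)^t,\wh R^t)$ is diagonalizable with $\kappa(V_W)$ bounded by a multiple of $\max\{\kappa(V_R),\kappa(V_{\wh R})\}$, while $\Pi$ is unitary and hence contributes no conditioning penalty. By Theorem~\ref{thm:fourier-coefficients-kylov} the spectrum of $W$ is precisely the set $E$ of eigenvalues of $\varphi(z)$: the block $(R^\dagger)^t$ supplies those outside the unit disc and $\wh R^t$ those inside, none on $\mathbb T$ by the splitting \eqref{splitting}. Finally $\mathrm{spec}(\Pi)$ consists of the $2^h$-th roots of unity, all lying on $\mathbb T$, so the monotonicity $Z_l(E,\mathrm{spec}(\Pi))\leq Z_l(E,\mathbb T)$ (from $\min_{\mathrm{spec}(\Pi)}|r|\geq\min_{\mathbb T}|r|$) removes the dependence on $h$ and produces exactly the factor $Z_l(E,\mathbb T)$ of $f$. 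Combining, $\sigma_l(Y^{(h)})\leq \norm{Y^{(h)}}_2\,\kappa(V_W)\,Z_l(E,\mathbb T)$, where the unit shift in the Zolotarev index coming from Corollary~\ref{cor:beckermann} is absorbed into $\gamma$.

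The remaining, and I expect most delicate, step is the normalization demanded by $\dq{f}$: one must replace the product $\norm{X^{(h)}(z)}_2\,\norm{Y^{(h)}(z)}_2$ by a uniform multiple of $\norm{\psi^{(h)}(z)}_2$. Since in general $\norm{X}_2\norm{Y}_2$ is \emph{not} controlled by $\norm{XY^t}_2$, one cannot merely invoke $\norm{\wt C^{(h)}}_2\leq\norm{\psi^{(h)}}_2$. Instead I would bound each Krylov factor directly: the seed vectors $a,\wh a,b,\wh b$ of \eqref{addends} are convergent geometric series in $G_D,\wh G_D,R,\wh R$, all of spectral radius $<1$ by \eqref{splitting}, so each Krylov matrix $\kryl{\cdot}{2^h}$ has Frobenius norm at most $\big(\sum_{j\geq 0}\norm{(\cdot)^j}_2^2\big)^{1/2}$ times the norm of its seed, a finite quantity independent of $h$ and of $z\in\mathbb T$. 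This gives $\norm{X^{(h)}(z)}_2\,\norm{Y^{(h)}(z)}_2\leq c$ uniformly; since $\norm{\psi^{(h)}(z)}_2$ is bounded away from $0$ (it converges uniformly on $\mathbb T$ to $\norm{(A_0^{(\infty)})^{-1}}_2>0$ as $A_{\pm1}^{(h)}\to0$), the ratio $\norm{X^{(h)}}_2\norm{Y^{(h)}}_2/\norm{\psi^{(h)}}_2$ is uniformly bounded and absorbed into $\gamma$, together with $\kappa(V_W)$ and the factor $2$. Collecting the estimates yields $\sigma_l(\wt C^{(h)}(z))\leq\norm{\psi^{(h)}(z)}_2\,\gamma\,Z_l(E,\mathbb T)$ for all $h$ and all $z\in\mathbb T$, that is $\varphi\in\dq{f}$ with $f(l)=\gamma\,Z_l(E,\mathbb T)$ and $\gamma$ a multiple of $\max\{\kappa(V_R),\kappa(V_{\wh R})\}$, as claimed.
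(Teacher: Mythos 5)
Your proposal is correct and follows essentially the same route as the paper: the factorization from Proposition~\ref{prop:displacement}, Corollary~\ref{cor:beckermann} applied to the displacement-rank-one factor $Y^{(h)}(z)$ with $\kappa(V_W)=\max\{\kappa(V_R),\kappa(V_{\wh R})\}$, and absorption of $\norm{X^{(h)}}_2\norm{Y^{(h)}}_2/\norm{\psi^{(h)}}_2$ into $\gamma$. Your added justifications --- the monotonicity $Z_l(E,\mathrm{spec}(\Pi))\leq Z_l(E,\mathbb T)$ and the uniform-in-$h$ bound on the Krylov factors showing the supremum defining $\gamma$ is finite --- are details the paper leaves implicit, not a different argument.
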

         \begin{proof}
Notice that a generic off-diagonal matrix $\wt C^{(h)}(z)$ in
$\psi^{(h)}(z)$ can be seen as a submatrix of
         \[
           [ I \ I ] X^{(h)}(z)Y^{(h)}(z)^t \begin{bmatrix}
             I \\ I 
           \end{bmatrix}. 
         \] 
In view of Proposition~\ref{prop:displacement} we know that
$Y^{(h)}(z)$ has displacement rank $1$. The displacement relation for
$Y^{(h)}(z)$ involves the matrices $W$ and $\Pi$ whose eigenvalues
correspond to those of $\varphi(z)$ and to the roots of the unity of
order $2^h$, respectively. Moreover, $W$ is diagonalizable by means of
$V_W:=\mathrm{diag}(V_R^{-t},V_{\wh R}^{-t})$. Therefore, applying
Corollary~\ref{cor:beckermann} we can write
         \[
         \sigma_{1+l}(Y^{(h)}(z))\leq  Z_l(E,\mathbb T)  \cdot \norm{Y^{(h)}(z)}_2 \cdot \kappa(V_W).
         \]
Since $W$ is block-diagonal we have
$\kappa(V_W)=\max\{\kappa(V_R),\kappa(V_{\wh R})\}$. In particular we
can bound the singular values of $\wt C^{(h)}(z)$ with the quantity
         \[
          \sigma_{1+l}(\wt C^{(h)}(z))\leq  2 \cdot Z_l(E,\mathbb T)\cdot\norm{X^{(h)}(z)}_2\cdot  \norm{Y^{(h)}(z)}_2 \cdot \kappa(V_W).
         \]
Defining $\gamma:=2\cdot \kappa(V_W) \cdot \max\limits_{h\in\mathbb N,z\in
  \mathbb T}\frac{2 \norm{X^{(h)}(z)}_2\cdot
  \norm{Y^{(h)}(z)}_2}{\norm{\psi^{(h)}(z)}_2}$ we get the
thesis.
         \end{proof}
         
The constant $\gamma$ in the previous theorem is an index of how much
the factorization $X^{(h)}(z)Y^{(h)}(z)^t$ is unbalanced. This
limitation is not present in the following result which describes
the asymptotic behavior as $h\to\infty$. It is possible to show that
the block diagonal terms in $W^{(h)}(z)=X^{(h)}(z)Y^{(h)}(z)^t$ quickly
decay to $0$ in practice, making the following bounds numerically
accurate after a few steps.

     \begin{thm}\label{prop:limit}
     Let $W^{(h)}(z)=X^{(h)}(z)Y^{(h)}(z)^{t}$, where $X^{(h)}(z)$ and $Y^{(h)}(z)$ are the matrices defined in Proposition \ref{prop:displacement}.
Then $\lim_{h\to\infty} W^{(h)}(z)=W^{(\infty)}$ has the following block partitioning
\[
W^{(\infty)}=\begin{bmatrix}0&B_1\\B_2&0\end{bmatrix}
\]
where the diagonal blocks are square and the off-diagonal blocks are independent of $z$. Moreover,
 we have
     $
     \rho_{V_D,V}(W^{(\infty)})=2,
     $
     where
     \[
     V_D:=\begin{bmatrix}
     \wh G_D&0\\ 0& G_D
     \end{bmatrix}\quad \text{and}\quad V:=\begin{bmatrix}
          R^\dagger&0\\ 0& \wh R^\dagger
          \end{bmatrix}.
     \]
If the matrices $G_D,\wh{G}_D,R$ and $\wh{R}$ are diagonalizable by means of $V_{G_D},V_{\wh{G}_D},V_{R}$ and $V_{\wh{R}}$, respectively, then, indicating with $\wt C$ the off-diagonal block  in $H_{\psi,0}$ corresponding to $\wt C(z)$ we have the following bounds to its singular values 
     \[
     \sigma_{1+2l}(\wt C)\leq \gamma\cdot Z_l(E,F), \qquad 
     \gamma := 2 \cdot \max \{ \kappa(V_G), \kappa(V_{\wh G}) \} \cdot
     \max \{ \kappa(V_R), \kappa(V_{\wh R}) \} \cdot 
     \norm{\wt C}_2, 
     \]
     where $E$ contains the eigenvalues of $\varphi(z)$ and $D(z)$ inside the unit disc while $F$ contains those outside.
     \end{thm}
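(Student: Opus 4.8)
The plan is to establish Theorem~\ref{prop:limit} in three stages: first the existence of the limit $W^{(\infty)}$ and its off-diagonal block structure, then the displacement-rank-$2$ relation $\rho_{V_D,V}(W^{(\infty)})=2$, and finally the singular value bound via Corollary~\ref{cor:beckermann} and Theorem~\ref{thm:zolotarev}. For the first stage I would examine the expressions for $a,b,\wh a,\wh b$ from Lemma~\ref{lem:addends} in the limit $h\to\infty$. Since $s\in 2^h\mathbb Z\cap\mathbb N$ forces $s=0$ as $h\to\infty$ (the only nonnegative multiple of $2^h$ that survives uniformly is $0$, the larger ones having $|z|^{\pm s}$ fixed but the index set thinning out), the geometric-type sums collapse to their $s=0$ terms. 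Concretely $\wh a\to H_{D,0}u$ and $\wh b\to H_{\psi,0}^t\wh v$, while $a\to z^{-1}G_D H_{D,0}u$ and $b\to (zR\,H_{\psi,0}^t\wh v)$ up to the spectral-radius-less-than-one decay of the remaining powers; one must track that $\|G_D\|,\|R\|<1$ and $\|\wh G_D\|,\|\wh R\|<1$ (guaranteed by the splitting property and Theorem~\ref{thm:fourier-coefficients-kylov}) so that the $z^{\pm(2^h-1)}$-weighted cross terms in \eqref{addends}---namely the two Krylov products scaled by $z^{2^h-1}$ and $z^{1-2^h}$---vanish in the limit on $\mathbb T$. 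This leaves only the two unscaled products $\kryl{\wh G_D,\wh a}{2^h}\kryl{\wh R^t,\wh b}{2^h}^t$ and $\kryl{G_D,a}{2^h}\kryl{R^t,b}{2^h}^t$, which assemble into the claimed $W^{(\infty)}=\left[\begin{smallmatrix}0&B_1\\B_2&0\end{smallmatrix}\right]$ with $z$-independent off-diagonal blocks (the $z$ factors cancelling exactly as in the proof of Lemma~\ref{lem:addends}).

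For the displacement relation, I would proceed exactly as in Proposition~\ref{prop:displacement}, but now directly on the limit blocks $B_1,B_2$. Each block $B_i$ is an outer product of Krylov-type matrices built from the infinite analogues of the generators, so each individually satisfies a rank-$1$ displacement relation of the form $(\text{generator})\,B_i-B_i\,(\text{shift})=(\text{dyad})$. Because $W^{(\infty)}$ is block anti-diagonal, the block-diagonal operators $V_D=\mathrm{diag}(\wh G_D,G_D)$ acting on the left and $V=\mathrm{diag}(R^\dagger,\wh R^\dagger)$ acting on the right decouple across the two nonzero blocks, so $V_DW^{(\infty)}-W^{(\infty)}V$ has rank at most $1+1=2$; a direct computation, matching the generators $\wh G_D,G_D$ against $R,\wh R$ through the pseudoinverses, shows the rank is exactly $2$. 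The role of the Moore-Penrose pseudoinverses is to handle the shift on the Krylov side cleanly when the generators are not invertible, so I would verify that $V$ uses $R^\dagger,\wh R^\dagger$ precisely so that $B_i V$ reproduces the shifted Krylov sequence up to the last (boundary) dyad.

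The singular value bound is then an application of Corollary~\ref{cor:beckermann} with $X=W^{(\infty)}$, $d=2$, and $A=V_D$, $B=V$. Diagonalizability of $V_D$ and $V$ follows from that of $G_D,\wh G_D,R,\wh R$ via $V_{V_D}=\mathrm{diag}(V_{\wh G_D},V_{G_D})$ and $V_V=\mathrm{diag}(V_R,V_{\wh R})$, whence $\kappa(V_{V_D})=\max\{\kappa(V_{\wh G_D}),\kappa(V_{G_D})\}$ and $\kappa(V_V)=\max\{\kappa(V_R),\kappa(V_{\wh R})\}$. Corollary~\ref{cor:beckermann} gives $\sigma_{1+2l}(W^{(\infty)})\le Z_l(E,F)\,\|W^{(\infty)}\|_2\,\kappa(V_{V_D})\kappa(V_V)$, where $E$ is the joint spectrum of $\wh G_D,G_D$ (eigenvalues inside the unit disc, for the $D$-problem and its reversal) and $F$ is the joint spectrum of $R^\dagger,\wh R^\dagger$ (reciprocals of eigenvalues outside the unit disc). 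Since $\wt C$ is the corresponding off-diagonal block of $H_{\psi,0}$, and $\wt C$ is obtained from $W^{(\infty)}$ by the fixed selection $[I\ I](\cdot)[I\ I]^t[I\ 0]^t$, passing from $\|W^{(\infty)}\|_2$ to $\|\wt C\|_2$ absorbs at most a factor of $2$ from the two-term sum, absorbed into the stated $\gamma$.

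The main obstacle I anticipate is the first stage: rigorously justifying that the limit $\lim_{h\to\infty}W^{(h)}(z)$ exists, is independent of $z\in\mathbb T$ in its off-diagonal blocks, and that the $z^{\pm(2^h-1)}$-scaled cross terms truly vanish. This requires uniform control of the Krylov matrices $\kryl{\cdot}{2^h}$ whose \emph{size grows} with $h$; one cannot naively take entrywise limits of matrices of increasing dimension. The clean way is to avoid the growing matrices altogether and instead take the limit at the level of the \emph{outer products} $B_i$, which live in a fixed $m/2\times m/2$ space: one shows that the finite sums defining $a,b,\wh a,\wh b$ converge and that the resulting bilinear forms stabilize, using the strict spectral-radius bounds to dominate the tails. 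Establishing this convergence uniformly in $z$ on the compact set $\mathbb T$, and confirming that $B_1,B_2$ are genuinely $z$-free, is the delicate point around which the rest of the argument is routine.
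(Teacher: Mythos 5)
Your proposal follows essentially the same route as the paper's proof: the paper likewise takes the limit at the level of the fixed-size outer products (obtaining $B_1=\sum_{i\ge 0}\wh G_D^i\wh a\wh b^t\wh R^i$ and $B_2=\sum_{i\ge 0}G_D^iab^tR^i$, with the $z^{\pm(2^h-1)}$-scaled diagonal blocks vanishing because the spectral radii of $G_D,\wh G_D,R,\wh R$ are less than $1$), then verifies a rank-$1$ displacement for each block so that the block anti-diagonal structure gives $\rho_{V_D,V}(W^{(\infty)})=2$, and concludes by Corollary~\ref{cor:beckermann}. The point you flag as delicate is handled in the paper exactly as you suggest, by working with the convergent sums in fixed dimension rather than with the growing Krylov matrices.
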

     \begin{proof}
From the definition of $X^{(h)}$ and $Y^{(h)}$ one has
\[
W^{(h)}(z)=\begin{bmatrix}
z^{2^h-1}\mathcal{KM}_{2^h}(\wh G_D,\wh a)J(\mathcal{KM}_{2^h}(R^t,b))^t&
\mathcal{KM}_{2^h}(\wh G_D,\wh a)(\mathcal{KM}_{2^h}(\wh R^t,\wh b))^t\\
\mathcal{KM}_{2^h}(G_D, a)(\mathcal{KM}_{2^h}(R^t, b))^t&
z^{1-2^h}\mathcal{KM}_{2^h}(G_D,a)J(\mathcal{KM}_{2^h}(\wh R^t,\wh b))^t.
\end{bmatrix}
\]
Since the spectral radii of the matrices $R$, $\wh R$, $G_D$ and $\wh G_D$ are less than $1$, then the block diagonal entries of 
     $W^{(h)}$ tend to zero as $h\to\infty$ and the two off-diagonal blocks have limits $B_1$ and $B_2$, respectively. More precisely
     \[
     W^{(\infty)}=\begin{bmatrix}
     0&B_1\\
     B_2&0
     \end{bmatrix},\quad B_1=\sum_{i\ge 0}\wh G_D^i\wh a\wh b^t\wh R^i,\quad B_2=\sum_{i\ge 0} G_D^iab^t R^i.
     \] 
Thus, we have
     \[
     \wh G_D B_1 -B_1 \wh R^\dagger=\wh a\wh b^t \wh R^\dagger.
     \]
     An analogous argument holds for $B_2$, and gives
     the rank-$2$ displacement.      
     The matrix $\wt C$ can be written as $\wt C = \left[I \ I \right]\cdot W^{(\infty)} \cdot\left[I\ I\right]^t\cdot \left[I\ 0\right]$, which corresponds to an off-diagonal block of $\lim_{h\to\infty}\psi^{(h)}(z)$. Due to the recurrence relation  $\psi^{(h+1)}(z^2)=\frac{1}{2}(\psi^{(h)}(z)+\psi^{(h)}(-z))$, this limit is equal to the central coefficient $H_{\psi,0}$ in the series expansion of $\psi(z)$. 
     The thesis follows by applying Corollary~\ref{cor:beckermann}. 
     \end{proof}

   \section{Experimental validation of the results}\label{sec:valid}

This section is devoted to verify the previous results by means of
numerical experiments.  We do that by computing numerical estimates of
the bound given in Theorem~\ref{prop:limit} together with the
singular values of the off-diagonal blocks of $H_{\psi,0}$.  The
actual bounds are obtained by choosing a particular family of
rational functions that suit the considered problem. We will see that,
even if our choices are relatively simple, and not optimal, they
already provide sharp decay bounds in practice.

% \begin{exa} \label{ex:markov}
 	As a first example, we consider instances of the problem coming from the framework of Markov chains i.e., the sum $A_{-1}+A_0+I+A_1$ is sub-stochastic, that is, it has non-negative entries and the sum along each row is at most 1. In particular, the matrices $A_{-1},I+A_0$ and $A_1$ have non negative entries  and are scaled in order to satisfy the splitting assumption~\eqref{splitting} (see also Section 4.3 in \cite{netna}). 
 	
 % In the first example 

We select dense $300\times 300$-blocks generated at random and such that $\varphi(z)$
is of quasiseparable rank $1$. For satisfying the latter hypothesis we
impose that the strictly triangular parts of the blocks are the
restrictions of dyads with the same left vectors.
 	
 	\begin{figure}
 \includegraphics[width=.48\linewidth]{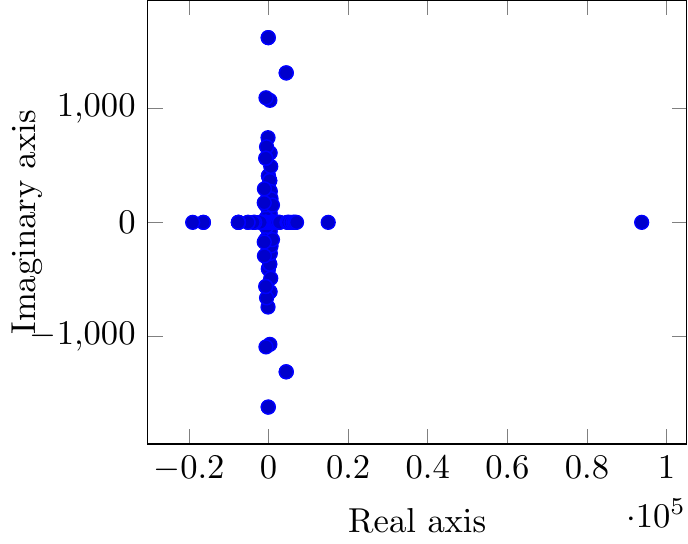}~\includegraphics[width=.48\linewidth]{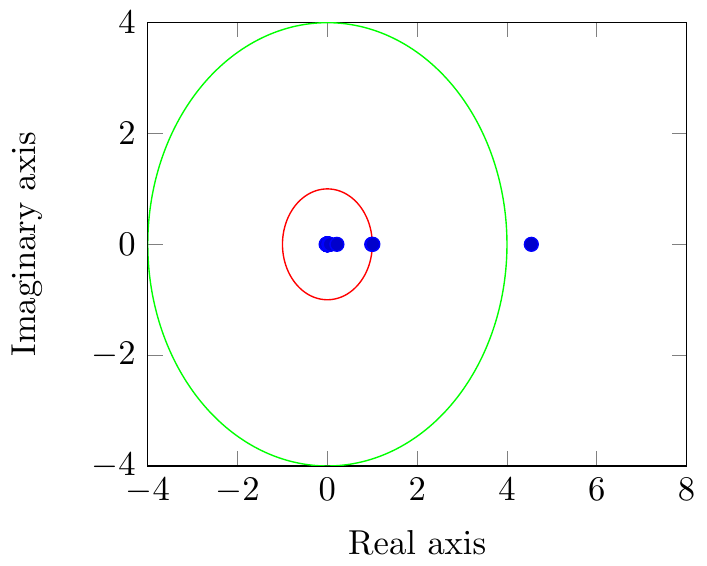}
   
 		\includegraphics[width=.48\linewidth]{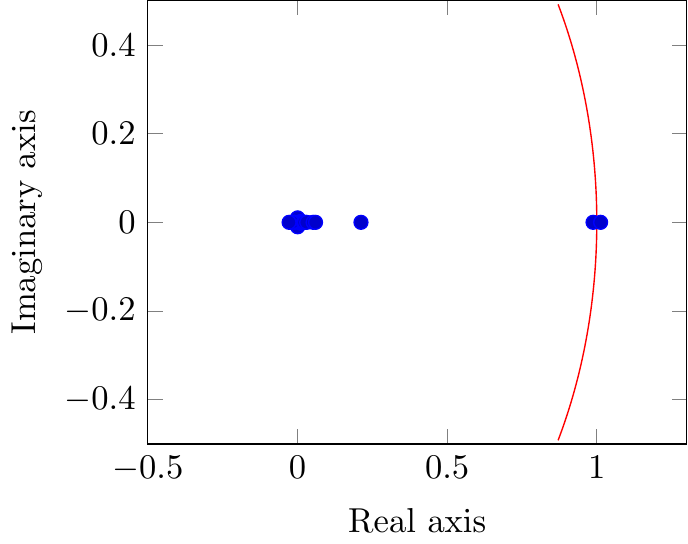}~\begin{tikzpicture}% table
 			\begin{semilogyaxis}[xlabel=$j$,ylabel=Singular values,width=.48\linewidth,
 			ymin=1e-14,xmax=25]
 			\addplot table[x index = 0, y index = 1] {quasi.dat};
 			\addplot table[x index = 0, y index = 2] {quasi.dat};
 			\legend{Singular values, Theorem~\ref{prop:limit}};
 			\end{semilogyaxis}
 	 			\end{tikzpicture}
 	 			\caption{Singular value decay in $H_0$ and the bound given
 	 				by Theorem~\ref{prop:limit}. }
 	 			\label{fig:quasidecay}
 		\end{figure}
 		
We divide the resulting distribution of the eigenvalues in three
cluster. One is contained in a neighborhood of $0$, another is in the
complement of the disc of radius $4$ and finally we have two
eigenvalues close to $1$, $\lambda_1$ and $\lambda_2$, inside and
outside the unit circle, respectively.
 		
Motivated by this, we choose the sequence of rational function
 		\[
 		r_l(z):=\frac{z-\lambda_1}{z-\lambda_2}z^{l-1},
 		\]
for roughly estimating the Zolotarev problem. The results are shown in Figure~\ref{fig:quasidecay}.
 		
% 		\begin{figure}
% 			\centering
% 		
% 			\end{figure}
% 			

%\end{exa}

% \begin{exa} \label{exa:poisson}
As a second example, we consider the linear system arising from the
discretization of a 2D Poisson equation, whose matrix is block
tridiagonal with the following form:
	\[
	  T = \begin{bmatrix}
	    \tilde A & \tilde C \\
	    B & A & C \\
	    & \ddots & \ddots & \ddots \\	 
	    & & B & A & C \\
	    & & & \hat B & \hat{A} \\   
	  \end{bmatrix}, \qquad 
	  A = \begin{bmatrix}
	    4 & -1 \\
	    -1 & 4 & -1 \\
	    & \ddots & \ddots & \ddots \\
	    & & -1 & 4 & -1 \\
	    & & & -1 & 4 \\
	  \end{bmatrix}, \qquad B = C = -I.
	\]
The above system can be solved by means of the cyclic reduction. The
eigenvalues of the associated $\varphi(z)$ can be computed explicitly
and one can easily check that they are real positive and provide a
splitting $t=1-1/(n+1)+O(1/(n+1)^2)$.  The matrices $A, B$ and $C$ are
very special instances of $1$-quasiseparable matrices, so we can state
a refined version of Theorem~\ref{prop:limit}, which gives a
smaller displacement rank for the limit case.
    
\begin{prop} \label{prop:poisson}
      Let $A, B$ and $C$ as above, and $W^{(\infty)}(z)$ as defined in Theorem~\ref{prop:limit}. If
      $\tilde C$ is one off-diagonal block of $H_{\psi, 0}$ then
   \[
	\sigma_{1+l}(\wt C)\leq \gamma \cdot Z_l(E,F), \qquad 
	\gamma := 2 \cdot \norm{\wt C}_2. 
   \]
    \end{prop}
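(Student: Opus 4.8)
The plan is to specialize the machinery of Lemma~\ref{lem:addends}, Proposition~\ref{prop:displacement} and Theorem~\ref{prop:limit} to the Poisson coefficients, exploiting two extra symmetries that are absent in the general $1$-quasiseparable case. First I would record the simplifications coming from $B=C=-I$ and $A=A^t$: here $\varphi(z)=A-(z+z^{-1})I$, so the whole family $\{\varphi(z)\}_z$ is simultaneously diagonalized by the orthogonal eigenvector matrix $Q$ of $A$, and the same holds for $D(z)=A_{22}-(z+z^{-1})I$. As a consequence the two solutions $R,\wh R$ of \eqref{eq:mateqR} coincide, are symmetric, and are diagonalized by $Q$; likewise $G_D=\wh G_D$ is symmetric and diagonalized by the eigenvector matrix of $A_{22}$. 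In particular $V_R,V_{\wh R},V_{G_D},V_{\wh G_D}$ may all be taken orthogonal, so that every condition number entering the constant $\gamma$ of Theorem~\ref{prop:limit} equals $1$; this already collapses $\gamma$ to $2\norm{\wt C}_2$. Moreover the eigenvalues of $\varphi(z)$ are real and split as $\xi_j<1<\xi_j^{-1}$, so that $E$ and $F$ are disjoint real sets and $Z_l(E,F)$ is amenable to the explicit Zolotarev estimate of Theorem~\ref{thm:zolotarev}.

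The heart of the argument is to improve the displacement rank from $2$ to $1$. In Theorem~\ref{prop:limit} the value $2$ is forced only because the two cross terms $B_1,B_2$ of $W^{(\infty)}$ obey Sylvester relations with \emph{different} operators, which is why one is obliged to use the block-diagonal pair $V_D=\mathrm{diag}(\wh G_D,G_D)$ and $V=\mathrm{diag}(R^\dagger,\wh R^\dagger)$. For the Poisson problem the coincidences $\wh G_D=G_D$ and $\wh R=R$ force $B_1$ and $B_2$ to solve Stein equations with the \emph{same} pair $(G_D,R)$. Passing to the limit in the definitions of $a,b,\wh a,\wh b$ from Lemma~\ref{lem:addends}, only the $s=0$ summand survives and one gets $a=z^{-1}G_D\wh a$ and $b=zR\wh b$; substituting into the limit blocks of Proposition~\ref{prop:displacement} yields $B_1=\sum_{i\ge0}G_D^i\wh a\wh b^t R^i$ and $B_2=\sum_{i\ge1}G_D^i\wh a\wh b^t R^i$, so that $B_1=B_2+\wh a\wh b^t$, i.e. the two blocks differ by a single dyad. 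The goal is then to show that, after the folding $\wt C=[I\ I]\,W^{(\infty)}[I\ I]^t[I\ 0]^t$, this relation together with the palindromic symmetry $\varphi(z)=\varphi(z^{-1})$ (equivalently $H_{\psi,j}=H_{\psi,-j}$) and the identity $R^{-1}=A-R$ lets one replace $(V_D,V)$ by the single pair $(G_D,R^\dagger)$ and reduce the displacement $G_D\wt C-\wt C R^\dagger$ to a rank-one matrix, that is $\rho_{G_D,R^\dagger}(\wt C)=1$.

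With the rank-one displacement in hand the proof concludes exactly as in Theorem~\ref{prop:limit}. Since $G_D$ and $R^\dagger$ are diagonalizable by orthogonal eigenvector matrices, their spectra lie respectively in the set $E$ of eigenvalues of $\varphi(z)$ and $D(z)$ inside the unit disc and in the set $F$ of those outside. Applying Corollary~\ref{cor:beckermann} with $d=1$ and $\kappa(V_{G_D})=\kappa(V_R)=1$ gives $\sigma_{1+l}(\wt C)\le 2\norm{\wt C}_2\,Z_l(E,F)$, which is the claim.

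I expect the only genuine obstacle to be the displacement-rank computation of the middle paragraph: verifying that the two rank-one pieces attached to $B_1$ and $B_2$ really collapse to a single dyad after the folding $[I\ I]\,\cdot\,[I\ I]^t[I\ 0]^t$, rather than stacking to rank two as in the general estimate. This is precisely where the full strength of the Poisson structure — simultaneous diagonalization by $Q$, the relation $B_1=B_2+\wh a\wh b^t$, the identity $R^{-1}=A-R$, and the palindromic symmetry $\psi(z)=\psi(z^{-1})$ — must be brought to bear. The remaining steps are the routine specialization of the already-proved results and the bookkeeping of which spectra populate $E$ and $F$.
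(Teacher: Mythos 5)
Your overall route is the same as the paper's: use the palindromic symmetry of $\varphi(z)=A-(z+z^{-1})I$ and of $D(z)$ to obtain $G_D=\wh G_D$, $R=\wh R$, note that all these matrices are symmetric and hence orthogonally diagonalizable (so every condition number in the constant of Theorem~\ref{prop:limit} equals $1$, giving $\gamma=2\norm{\wt C}_2$), and then argue that the displacement rank of the folded matrix $[I\ I]\,W^{(\infty)}[I\ I]^t=B_1+B_2$ drops from $2$ to $1$, so that Corollary~\ref{cor:beckermann} applies with $d=1$. Your derivation of the limits $a=z^{-1}G_D\wh a$, $b^t=z\,\wh b^tR$ and of the relation $B_1=B_2+\wh a\wh b^t$ is correct and is in fact more explicit than what the paper writes.

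The gap is precisely the step you yourself flag as ``the only genuine obstacle'': you never establish that the folded matrix has displacement rank one, and the direct computation does not deliver it. From $B_1=\sum_{i\ge0}G_D^i\wh a\wh b^tR^i$ and $B_2=G_DB_1R$ one finds
\[
G_D(B_1+B_2)-(B_1+B_2)R^{-1}=-\,\wh a\,\wh b^tR^{-1}-G_D\wh a\,\wh b^t,
\]
a sum of two dyads whose column spaces are spanned by $\wh a$ and $G_D\wh a$ and whose row spaces by $R^{-t}\wh b$ and $\wh b$; generically this has rank $2$, not $1$, and the Stein-type variants ($N-G_DNR$, $G_D^{-1}N-NR$, and so on) behave the same way. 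Hence the collapse to a single dyad, if it holds, must come from an additional Poisson-specific identity; you invoke $R^{-1}=A-R$ and the simultaneous diagonalization by the eigenvector matrix of $A$, but you do not show how these force, say, $G_D\wh a$ to be parallel to $\wh a$ or $R^{-t}\wh b$ to $\wh b$, which is what the rank-one claim would require. To be fair, the paper's own proof asserts the rank-one displacement in a single sentence without computation, so you have reproduced its architecture faithfully; but as a proof your proposal, like the paper's, leaves the decisive claim unverified, and the obvious verification points the other way. Everything else --- the identification of $E$ and $F$ with the real eigenvalues inside and outside the unit disc, the application of Corollary~\ref{cor:beckermann} with unit condition numbers, and the harmlessness of the final column restriction since singular values of a submatrix are dominated by those of the full matrix --- is routine and correct once that displacement step is granted.
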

    
    \begin{proof}
      Due to the symmetry
      properties of the coefficients $A, B$ and $C$, 
      and to the palindromicity of $\phi(z)$ and 
      $D(z)$, we have
      \[
        G_D = \hat G_D, \qquad 
        R = \hat R, \qquad 
        a = \hat a, \qquad 
        b = \hat b.
      \]
      In this way, we 
      find that the matrix
      $
        \begin{bmatrix}
          I & I 
        \end{bmatrix} W^{(\infty)} \begin{bmatrix}
         I \\
         I
        \end{bmatrix}
      $
      satisfies a displacement relation of rank $1$
      with the same matrices of Theorem~\ref{prop:limit}. Therefore, 
      the bound on the singular values holds with
      $l$ instead of $2l$. Moreover, since the 
      matrices $G, \hat G, R$ and $\hat R$ can be
      diagonalized by means of orthogonal matrices, 
      the maximum of their spectral conditioning is
      $1$. 
    \end{proof}
    
    In order to verify the bound for this example we have carried
    out CR until convergence on a 
    $200 \times 200$ example and we have plotted the singular values of an off-diagonal block of the computed $H_0$. Then, we have estimated the bound coming from Proposition~\ref{prop:poisson} using a rational function of this form:
    \[
    r_l(z):=(z-\delta)\prod_{j=1}^{l-1}\frac{z-q_j}{z-p_j}.
    \]
    The points $q_j$ and $p_j$ are chosen with a greedy approach as the maximizer and minimizer of $r_{l-1}(z)$ in the sets $E$ and $F$ respectively. The point $\delta$ is the rightmost eigenvalue of $\varphi(z)$ inside the unit disc. The bound is compared with the one coming from Theorem~\ref{thm:zolotarev} and with the one from \cite{netna}.  
    The results are reported in Figure~\ref{fig:decaypoisson2}. 
    	
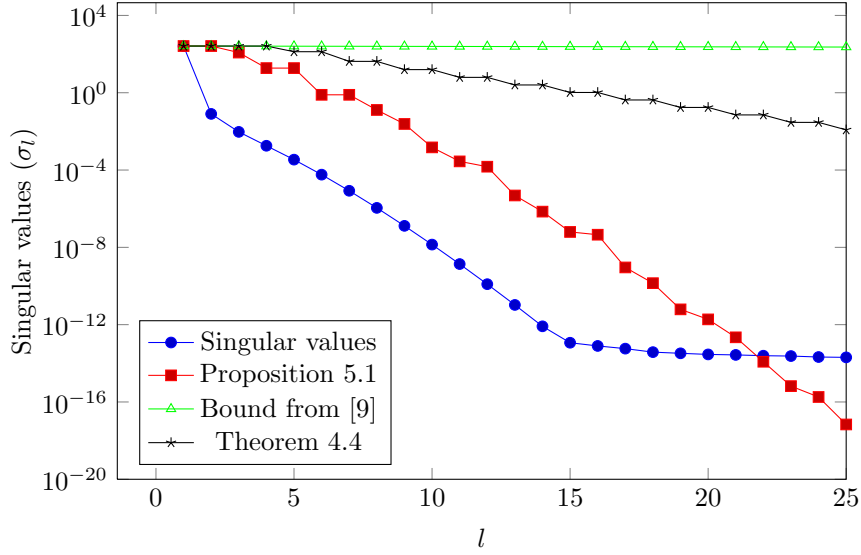
\begin{figure} 
    	\begin{center}
    	\begin{tikzpicture}% file
    	\begin{semilogyaxis}[
    	  xlabel=$l$,
    	  width=.76\linewidth,
    	  height=.38\textheight,
    	  ylabel=Singular values ($\sigma_l$),
    	  ymin=1e-20,
    	  xmax=25, legend pos=south west]
    	\addplot table[x index=0, y index = 1] {poisson.dat};
    	\addplot table[x index=0, y index = 2] {poisson.dat};
    	\addplot[domain=1:25,green,mark=triangle] 
    	  {260.65 * (0.995)^(x-1) };
    	  \addplot table[x index=0, y index = 3] {poisson.dat};
    	\legend{Singular values, Proposition~\ref{prop:poisson},
    		Bound from \cite{netna}, Theorem~\ref{thm:zolotarev}}
    	\end{semilogyaxis}
    	\end{tikzpicture}
    	\end{center}
    	\caption{Decay of the singular values in one of 
    		the off-diagonal blocks of  $H_0$ in 
    		the Laurent expansion of $\psi(z)$, computed
    		by means of the CR for the Poisson matrix. We have reported the actual decay and the bounds obtained by means of Proposition~\ref{prop:poisson}, the results
    		in \cite{netna}, and Theorem~\ref{thm:zolotarev}.}
    	\label{fig:decaypoisson2}

     \end{figure}
     In this case the bound from \cite{netna} is useless since the approach used there relies on a wide splitting of the eigenvalues of $\varphi(z)$. It is also interesting to note that even if the bound of Theorem~\ref{thm:zolotarev} is optimal for real intervals an ad-hoc choice for the approximant in a discrete set can deliver better results. 
%     \end{exa}
  \section{Some applications}\label{sec:appl}
In this section we show some applications of CR in the case of quasiseparable blocks which for notational simplicity we refer to as Quasiseparable Cyclic Reduction (QCR for short), and we present some numerical results.

A first application concerns solving a block tridiagonal linear system of the kind $\mathcal A_nx=b$ where $\mathcal A_n=\hbox{trid}_n(B,A,C)$, the blocks $A,B,C$ are $m\times m$ matrices such that CR can be carried out with no breakdown, the right-hand side vector $b$ and the unknown vector $x$ are partitioned into $n$ blocks $b_i$ and $x_i$, respectively of size $m$.
For simplicity, assume $n=2^k-1$ so that the description of CR is simpler, for more details in the general case we refer the reader to \cite{CR}.  
  
The system can be written in the form
  \begin{equation}\label{trid_linsystem}
  \begin{bmatrix}
  A&C\\
  B&A&C\\
  &\ddots &\ddots &\ddots \\
  & &\ddots &\ddots & C\\
  & && B & A
  \end{bmatrix}
  \begin{bmatrix}
   x_1\\
   x_2\\
   \vdots\\
   \vdots\\
   x_n
   \end{bmatrix}
   =
   \begin{bmatrix}
    b_1\\
    b_2\\
    \vdots\\
    \vdots\\
    b_n
    \end{bmatrix},\qquad
    A,B,C\in\mathbb R^{m\times m},\quad x_i,b_i\in\mathbb R^m.
  \end{equation}
  An odd-even permutation of block rows and columns yields
  \[
  \left[\begin{array}{cccc|ccc}
  A & & & & C&&\\
   & A& & & B&\ddots&\\
  & &\ddots& & & \ddots& C\\
  && & A& & \ddots& B\\
  \hline
  B &C & & & A&&\\
   & \ddots&\ddots & &&\ddots&\\
  & & B&C & & & A\\
  \end{array}\right]
  \begin{bmatrix}
   x_1\\
   x_3\\
   x_5\\
   \vdots\\
   x_n\\
   \hline
   x_2\\
   \vdots\\
   x_ {n-1}
   \end{bmatrix}
   =
   \begin{bmatrix}
    b_1\\
    b_3\\
    b_5\\
    \vdots\\
    b_n\\
    \hline
    b_2\\
    \vdots\\
    b_ {n-1}
    \end{bmatrix}
  \]
  Then one step of block Gaussian elimination performed to vanish the south-western block, yields
  \[
  \left[\begin{array}{ccccc|cccc}
  A & & && & C&&\\
   & A& && & B&\ddots&\\
   & & \ddots& & &&\ddots&\ddots\\ 
  & &&\ddots& & & &\ddots &C\\
  && & &A & &&& B\\
  \hline
  & & & & & A^{(1)}& C^{(1)}&\\
  & & & & & B^{(1)}&\ddots&\ddots   \\
  & &&&&&\ddots&\ddots&C^{(1)}\\
  && & && & & B^{(1)}& A^{(1)}
  \end{array}\right]
  \begin{bmatrix}
   x_1\\
   \vdots\\
   \vdots\\
   \vdots\\
   x_n\\
   \hline
   x_2\\
   x_4\\
   \vdots\\
   \vdots\\
   x_ {n-1}
   \end{bmatrix}
   =
   \begin{bmatrix}
    b_1\\
    \vdots\\
    \vdots\\
    \vdots\\
    b_n\\
    \hline
    b_1^{(1)}\\
    b_2^{(1)}\\
    \vdots\\
    \vdots\\
    b_ {\frac{n-1}2}^{(1)}
    \end{bmatrix}
  \]
  with 
  \begin{equation}\label{cr1}
  \begin{split}
  A^{(1)}&=A-BA^{-1}C-CA^{-1}B,\\
  B^{(1)}&=-BA^{-1}B,\quad
  C^{(1)}=-CA^{-1}C,\\
  b_i^{(1)}&=b_{2i}-BA^{-1}b_{2i-1}-CA^{-1}b_{2i+1},\quad i=1,\ldots\frac{n-1}2.
  \end{split}
  \end{equation}
  
  The south-eastern block yields the system of the kind $\mathcal A_{\frac{n-1}2}x_{even}=b^{(1)}$ with $\mathcal A_{\frac{n-1}2}=\hbox{trid}_{\frac{n-1}2}(B^{(1)},A^{(1)},C^{(1)})$, 
  where $x_{even}$  denotes the subvector of $x$ formed with the even  block components, whose solution can be obtained by cyclically applying CR. Once the even block components of the block vector $x$ have been computed, they can be substituted in the first part of the linear equations so that the odd block components of $x$ are recovered. The hierarchical quasiseparability of the block matrices  makes each operation of low cost.

Thus, the first (as well as the generic) step of CR performs the following steps
  \begin{enumerate}[(i)]
  \item Given the $m\times m$ matrices $A,B,C$ compute the matrices $A^{(1)},B^{(1)},C^{(1)}$.
  \item  Given  the $m$-vectors $b_i$, $i=1,\ldots,n$, compute $b_i^{(1)}$, $i=1,\ldots,\frac{n-1}2$ by means of \eqref{cr1}.
  \item Recursively solve the system $\hbox{trid}_{\frac{n-1}2}x_{even}=b^{(1)}$ by means of CR. 
  \item Compute the odd components of the solution by mean of back substitution:
  \begin{align*}
  x_1&=A_1^{-1}(b_1-C_1x_2),\\
  x_i&=A^{-1}(b_i-Bx_{i-1}-Cx_{i+1}),\qquad i=3,5,\dots, n-2,\\
  x_n&=A^{-1}(b_1-Bx_{n-1}),
  \end{align*}
  \end{enumerate}
    
In the case where the blocks $A,B,C$ are quasiseparable, say, they are tridiagonal, not necessarily Toeplitz as in \cite{chandra}, and relying on the $\mathcal H$-matrix representation as in \cite{netna},  
in view of the preservation of the hierarchical structure of the blocks $A^{(h)},B^{(h)},C^{(h)}$ shown in the previous sections,
the cost of step (i) is  
  $O(m\log^2 m)$, while the costs of steps (ii) and (iv) is $O(nm\log m)$.  
  Therefore, indicating with $T(m,n)$ the asymptotic computational complexity of the whole algorithm with $n=2^k-1$, we have
  \[
  T(m,n)= T\left(m,\frac {n-1}2\right)+O(m\log^2 m)+O(nm\log m).
  \]
  Since $T(m,1)=O(m\log^2 m)$, we obtain $T(m,n)=O(mn \log m)+O(m\log^2m\log n)$.
  For $m=n$ this yields $T(n,n)=O(n^2\log n) +O(n\log^3 n)$.
  
  It is interesting to recall that if $\mathcal A_n$ is the discrete Laplacian where $A=\hbox{trid}_n(-1,4,-1)$, $B=C=-I$ then CR has a cost of $O(n^2\log n)$ ops \cite{fps1} while the fast Poisson solvers based on the combination of Fourier analysis and CR \cite{fps} have a cost of $O(n^2\log\log n)$ ops.
  Our approach has a slightly higher cost but covers a wider range of cases
including block tridiagonal block Toeplitz matrices with banded (not necessarily Toeplitz) blocks.

Observe that CR preserves slightly more general structures than the block tridiagonal block Toeplitz. In particular it is possible to handle the case where the first and last blocks in the main diagonal differ from the other blocks on the same diagonal \cite{CR}.
  
  \subsection{Solving certain generalized Sylvester equations}\label{sec:sylv}
  For an $m\times n$ matrix $X$ denote $x=\hbox{vec}(X)$ the $mn$-vector obtained by stacking the columns of $X$. Then, for any pair of matrices $A,B$ of compatible sizes, one has $\hbox{vec}(AB)=(I\otimes A)\hbox{vec}(B)=(B^t\otimes I)\hbox{vec}(A)$.
  
  Consider the linear matrix equation
   \begin{equation}\label{sylv}
   \sum_{i=1}^sA_i X B_i=C,\qquad A_i\in\mathbb R^{m\times m},~B_i\in\mathbb R^{n\times n},~ X,C\in\mathbb R^{m\times n},
   \end{equation}
   and suppose that  $B_i$, $i=1,\dots,s$ are tridiagonal Toeplitz matrices. 
   
   Applying the vec operator on both sides of \eqref{sylv} we get the $mn\times mn$ linear system
   \begin{equation}\label{linsys}
  Wx=c,\qquad W=\sum_{i=1}^s B_i^t\otimes A_i,\quad x=\hbox{vec} (X),\quad c=\hbox{vec} (C). 
    \end{equation}
    Since each term $B_i^t\otimes A_i$ is block tridiagonal and block Toeplitz, then the coefficient matrix of \eqref{linsys} is block tridiagonal, block Toeplitz as well.
      If the matrices $A_i$ are $k_i$-quasiseparable then the blocks of $W$ are $k$-quasiseparable with $k=\sum_{i=1}^sk_i$. If $k$ is negligible with respect to $m$ then we may solve the generalized Sylvester equation by means of QCR.
      \subsection{Numerical results}\label{sec:numexp}
A possible application of this algorithm is solving discretized partial differential equations coming from convection diffusion problems of the form
\begin{equation}\label{convdiff}
-\epsilon\Delta u(x,y)+\mathbf w\cdot \nabla u(x,y)=f(x,y),\qquad \Omega\subset \mathbb R^2
\end{equation}
where $u(x,y)$ is the unknown function, and
 we assume that the convection vector $\mathbf w$ depends only on one of the two coordinates. For simplicity we assume that it only depends on $x$. According to  \cite{simpal} we can discretize the above problem obtaining the following Sylvester equation in the matrix unknown $U$:
    \[
    \epsilon T_1 U + \epsilon U T_2+ \Phi_1 B_1 U + \Phi_2 U B_2=F.
    \]
    The independence on $y$ of the convection vector ensures that all the right factors in the previous equation are almost Toeplitz.
    The matrices $\Phi_i$ are diagonal while
    $T_i$ and $B_i$ arise from the discretization of the differential operators and they are all tridiagonal and Toeplitz with the exceptions of the first and last rows (due to the boundary conditions). The matrix $F$ contains the evaluations of the function $f$ on the discretized grid. We refer to \cite{simpal} for an in depth analysis. 
    
    We performed some numerical tests on one of the example in \cite{simpal} namely \eqref{convdiff} with $\epsilon=0.0333$ and $\mathbf w = (1+\frac{(x+1)^2}{4},0)$. The right-hand side $F$ is chosen at random. Since in this case $\Phi_2=0$ the problem is reduced to solving the Sylvester equation
    \[
    (\epsilon T_1+\Phi_1 B_1)U+U\epsilon T_2=F.
    \]
    In Figure~\ref{fig:results} and Table~\ref{tab:qcrlyap} we compare the timings and the residue of QCR with those of the function \texttt{lyap} from the control toolbox of MATLAB R2013a.
    Note that our approach can be applied even if the second coordinate of $\mathbf w$ is non zero and dependent only on $x$. In fact, in this way we retrieve a generalized Sylvester equation that can be solved with this algorithm.
    
    \begin{figure}
        	\begin{center}
        	\begin{tikzpicture}% file
        	\begin{loglogaxis}[
        	  xlabel=Size,
        	  width=.75\linewidth,
        	  height=.3\textheight,
        	  ylabel=Time (s),
        	  legend pos=north west]
        	\addplot table[x index=0, y index = 1] {palittasimoncini.dat};
        	\addplot table[x index=0, y index = 3] {palittasimoncini.dat};
        	\legend{QCR, {\tt lyap}};
        	\end{loglogaxis}
        	\end{tikzpicture}
        	\end{center}
        	\caption{Timings of the quasiseparable cyclic reduction (QCR) and the Sylvester solver implemented in the {\tt lyap} function in MATLAB. }\label{fig:results}
         \end{figure}
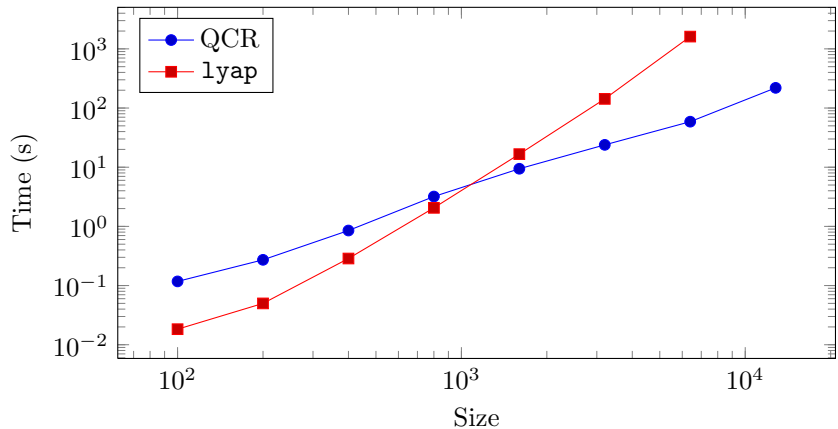
         
         \begin{table}
         \begin{center}
        	 \pgfplotstabletypeset[
        	 columns/0/.style={
        	 	column name = Size},
        	 columns/1/.style={
        	 	column name = $T_{\text{QCR}}$ (s)},
        	 columns/2/.style={
        	 	column name = $Res_{\text{QCR}}$},
        	 columns/3/.style={
        	 	column name = $T_{\text{lyap}}$ (s)},
        	 columns/4/.style={
        	 	column name = $Res_{\text{lyap}}$},
        	 empty cells with ={---}  
        	 ]{palittasimoncini.dat}
         \end{center}
         \caption{Timings and residues of the Sylvester
         	equation solved by means of the quasiseparable cyclic reduction (QCR) and the Sylvester solver implemented in the {\tt lyap} function of MATLAB. The residues are computed by 
         	evaluating $\norm{\epsilon T_1 U + \epsilon U T_2 + \Phi_1 B_1 U - D}_2$. }
         	\label{tab:qcrlyap}
         \end{table}
  \section{Concluding remarks}\label{sec:conclusion}
  
  In this work we have provided an alternative analysis, 
  with respect to \cite{netna}, 
  of the numerical preservation
  of the quasiseparable structures of the matrices generated by the cyclic reduction. The theoretical results that we have
  obtained better describe the phenomenon in many
  instances coming from the applications. Examples 
  related to the solution of Sylvester
  equations arising in the discretization of
  elliptic PDEs, and from queuing theory, have
  been shown. 
  
  The
  connection between the 
  numerical preservation of the structure and the
  existence of accurate solutions of certain
  discrete rational approximation problems have been pointed out. 
  
  In the second part, the use of  CR, 
  together with hierarchical representations, 
  as a direct method for the solution of block
  tridiagonal ``almost'' Toeplitz systems has been explored and has lead to the algorithm QCR.
  This procedure has an asymptotic complexity of 
  $O(mn \log m) + O(m\log^2 m \log n)$, 
  where $n$ is the number of the blocks
  and $m$ their size. 
  Applications to the solution of elliptic differential
  equations have been shown and the effectiveness of the
  approach reported.
  
  Applications to solving certain generalized Sylvester equations, 
  of the form \[
    \sum_{i = 1}^k A_i X B_i = D, 
  \]
  have been shown in the case
  where all the blocks $B_i$s are tridiagonal Toeplitz (possibly with only the first and last row
  with different entries), and the $A_i$s have a low quasiseparable rank. Under these hypothesis, and the
  assumption that the sum of the quasiseparable ranks of
  the $A_i$s is negligible compared to $m$, 
  the complexity of the method is $O(m^2 \log m)$. 
  
 \bibliographystyle{abbrv}
 \bibliography{bibliography} 

\begin{thebibliography}{10}

\bibitem{abr1984}
M.~Abramowitz and M.~Danos.
\newblock {\em Pocketbook of mathematical functions}.
\newblock Deutsch, 1984.

\bibitem{akh1990}
N.~I. Akhiezer.
\newblock {\em Elements of the theory of elliptic functions}, volume~79.
\newblock American Mathematical Soc., 1990.

\bibitem{beck2004}
B.~Beckermann.
\newblock Singular values of small displacement rank matrices. talk at
  conference structured numerical linear algebra problems: Algorithms and
  applications, cortona, 2004.

\bibitem{benzi2014decay}
M.~Benzi and P.~Boito.
\newblock Decay properties for functions of matrices over {$C^*$}-algebras.
\newblock {\em Linear Algebra Appl.}, 456:174--198, 2014.

\bibitem{benzi2013decay}
M.~Benzi, P.~Boito, and N.~Razouk.
\newblock Decay properties of spectral projectors with applications to
  electronic structure.
\newblock {\em SIAM Rev.}, 55(1):3--64, 2013.

\bibitem{benzi-simoncini}
M.~Benzi and V.~Simoncini.
\newblock Decay bounds for functions of {H}ermitian matrices with banded or
  {K}ronecker structure.
\newblock {\em SIAM J. Matrix Anal. Appl.}, 36(3):1263--1282, 2015.

\bibitem{bim:book}
D.~A. Bini, B.~Iannazzo, and B.~Meini.
\newblock {\em Numerical Solution of Algebraic Riccati Equations}.
\newblock Fundamentals of Algorithms n. 9. SIAM, Philadelphia, 2012.

\bibitem{SMC}
D.~A. Bini, G.~Latouche, and B.~Meini.
\newblock {\em Numerical methods for structured Markov chains}.
\newblock Oxford University Press, 2005.

\bibitem{netna}
D.~A. Bini, S.~Massei, and L.~Robol.
\newblock Efficient cyclic reduction for {Quasi-Birth-Death} problems with rank
  structured blocks.
\newblock {\em Appl. Num. Math.}, 2016.

\bibitem{CR}
D.~A. Bini and B.~Meini.
\newblock The cyclic reduction algorithm: from {P}oisson equation to stochastic
  processes and beyond.
\newblock {\em Numer. Algorithms}, 51(1):23--60, 2009.

\bibitem{borm}
S.~B{\"o}rm, L.~Grasedyck, and W.~Hackbusch.
\newblock Hierarchical matrices.
\newblock {\em Lecture notes}, 21:2003, 2003.

\bibitem{canuto}
C.~Canuto, V.~Simoncini, and M.~Verani.
\newblock On the decay of the inverse of matrices that are sum of {K}ronecker
  products.
\newblock {\em Linear Algebra Appl.}, 452:21--39, 2014.

\bibitem{chandra}
S.~Chandrasekaran, P.~Dewilde, M.~Gu, and N.~Somasunderam.
\newblock On the numerical rank of the off-diagonal blocks of {S}chur
  complements of discretized elliptic {PDE}s.
\newblock {\em SIAM J. Matrix Anal. Appl.}, 31(5):2261--2290, 2010.

\bibitem{golub-vanloan}
G.~H. Golub and C.~F. Van~Loan.
\newblock {\em Matrix computations}.
\newblock Johns Hopkins Studies in the Mathematical Sciences. Johns Hopkins
  University Press, Baltimore, MD, fourth edition, 2013.

\bibitem{guttel2015zolotarev}
S.~G\"uttel, E.~Polizzi, P.~T.~P. Tang, and G.~Viaud.
\newblock Zolotarev quadrature rules and load balancing for the feast
  eigensolver.
\newblock {\em SIAM J. Sci. Comput.}, 37(4):A2100--A2122, 2015.

\bibitem{fps}
D.~Heller.
\newblock Some aspects of the cyclic reduction algorithm for block tridiagonal
  linear systems.
\newblock {\em SIAM J. Numer. Anal.}, 13(4):484--496, 1976.

\bibitem{hockney1965fast}
R.~W. Hockney.
\newblock A fast direct solution of {P}oisson's equation using {F}ourier
  analysis.
\newblock {\em J. ACM}, 12(1):95--113, 1965.

\bibitem{Miya1}
M.~Kobayashi and M.~Miyazawa.
\newblock Tail asymptotics of the stationary distribution of a two-dimensional
  reflecting random walk with unbounded upward jumps.
\newblock {\em Adv. in Appl. Probab.}, 46(2):365--399, 2014.

\bibitem{kresuns}
D.~Kressner and A.~Susnjara.
\newblock Fast computation of spectral projectors of banded matrices.
\newblock {\em arXiv preprint arXiv:1608.01164}, 2016.

\bibitem{tandem}
D.~P. Kroese, W.~R.~W. Scheinhardt, and P.~G. Taylor.
\newblock Spectral properties of the tandem {J}ackson network, seen as a
  quasi-birth-and-death process.
\newblock {\em Ann. Appl. Probab.}, 14(4):2057--2089, 2004.

\bibitem{LatRam}
G.~Latouche and V.~Ramaswami.
\newblock {\em Introduction to matrix analytic methods in stochastic modeling},
  volume~5.
\newblock SIAM, 1999.

\bibitem{med2005}
A.~Medovikov and V.~Lebedev.
\newblock Variable time steps optimization of l$\omega$-stable crank--nicolson
  method.
\newblock {\em Russian J. Numer. Anal. Math. Modelling}, 20(3):283--303, 2005.

\bibitem{Miya}
M.~Miyazawa.
\newblock Tail decay rates in double {QBD} processes and related reflected
  random walks.
\newblock {\em Math. Oper. Res.}, 34(3):547--575, 2009.

\bibitem{neuts}
M.~F. Neuts.
\newblock {\em Matrix-geometric solutions in stochastic models: an algorithmic
  approach}.
\newblock Courier Corporation, 1981.

\bibitem{simpal}
D.~Palitta and V.~Simoncini.
\newblock Matrix-equation-based strategies for convection--diffusion equations.
\newblock {\em BIT Numerical Mathematics}, pages 1--26, 2015.

\bibitem{fps1}
P.~N. Swarztrauber and R.~A. Sweet.
\newblock Vector and parallel methods for the direct solution of {P}oisson's
  equation.
\newblock {\em J. Comput. Appl. Math.}, 27(1-2):241--263, 1989.

\bibitem{vanbarel:book1}
R.~Vandebril, M.~Van~Barel, and N.~Mastronardi.
\newblock {\em Matrix computations and semiseparable matrices. \donothing{A}
  Linear systems}, volume~1.
\newblock Johns Hopkins University Press, Baltimore, MD, 2008.

\bibitem{vanbarel:book2}
R.~Vandebril, M.~Van~Barel, and N.~Mastronardi.
\newblock {\em Matrix computations and semiseparable matrices. \donothing{Z}
  Eigenvalue and singular value methods}, volume~2.
\newblock Johns Hopkins University Press, Baltimore, MD, 2008.

\bibitem{zol}
E.~I. Zolotarev.
\newblock Application of elliptic functions to questions of functions deviating
  least and most from zero.
\newblock {\em Zap. Imp. Akad. Nauk St. Petersburg}, 21:30:1–59, 1877.

\end{thebibliography}
\end{document}